\documentclass[11pt,reqno]{amsart}
\usepackage{amscd,amssymb,amsmath,amsthm}
\usepackage{mathtools}
\usepackage[english]{babel}
\usepackage[T1]{fontenc}
\usepackage[arrow,matrix]{xy}
\usepackage{graphicx,subfigure,tikz,tikz-3dplot}
\usetikzlibrary{positioning,matrix,arrows,calc}
\usepackage{cite}
\usepackage{dsfont}
\usepackage{enumitem}
\usepackage{bbm}
\usepackage{hyperref}
\hypersetup{
	colorlinks=true,
	linkcolor=blue}
\usepackage{array}

\oddsidemargin=0.3in \evensidemargin=0.3in \theoremstyle{plain}
\newtheorem{theorem}{Theorem}[section]
\newtheorem{lemma}[theorem]{Lemma}
\newtheorem{proposition}[theorem]{Proposition}
\newtheorem{corollary}[theorem]{Corollary}

\theoremstyle{definition}
\newtheorem{remark}[theorem]{Remark}
\newtheorem{definition}[theorem]{Definition}
\newtheorem{example}[theorem]{Example}
\newtheorem{question}[theorem]{Open Question}

\numberwithin{equation}{section}
\newcommand{\N}{\mathbb{N}}
\newcommand{\Z}{\mathbb{Z}}
\newcommand{\R}{\mathbb{R}}
\newcommand{\C}{\mathbb{C}}
\newcommand{\D}{\mathbb{D}}
\newcommand{\DD}{\mathcal{D}}
\newcommand{\mF}{\mathcal{F}}
\newcommand{\mO}{\mathcal{O}}
\newcommand{\dg}{\mathbf{m}}
\newcommand{\proj}{\mathrm{proj}}

\newcommand{\Sc}{\mathbb{S}^1}
\newcommand{\Sph}{\mathbb{S}^3}
\newcommand{\TT}{\mathcal{T}}
\newcommand{\Ts}{\mathcal{T}^+}
\newcommand{\hF}{\widehat{F}}

\newcommand{\cwnew}{w}
\newcommand{\cznew}{z}

\newcommand{\intrr}{\mathop{\mathrm{int}}}

\newcommand{\E}{\mathbb{E}}
\newcommand{\Prob}{\mathbb{P}}

\newcommand{\dT}{G}

\newcommand{\id}{\mathrm{id}}

\newcommand{\Ind}{\mathbbm{1}}

\newcommand{\SL}{\mathrm{SL}}
\newcommand{\Homeo}{\mathrm{Homeo}}

\newcommand{\eps}{\varepsilon}
\newcommand{\Eta}{H}
\newcommand{\wEta}{\widetilde{\Eta}}
\newcommand{\weta}{\widetilde{\eta}}
\newcommand{\wS}{\widetilde{S}}

\newcommand{\tF}{\widetilde{F}}

\newcommand{\mgr}{\mu}
\newcommand{\msp}{\nu}
\newcommand{\hmsp}{\widehat{\msp}}
\newcommand{\wto}{\xrightarrow{\text{weakly}}}

\newcounter{hypocounter}
\renewcommand\thehypocounter{(H\arabic{hypocounter})}
\title{Atomic physical measures for non-invertible random dynamical systems}
\author{Vincent P. H. Goverse}
\thanks{V.G. was supported by the Scientific High Level Visiting Fellowships (SSHN) of the Higher Education, Research and Innovation Department of the French Embassy in the United Kingdom and the EPSRC
Centre for Doctoral Training in Mathematics of Random Systems: Analysis, Modelling and Simulation (EP/S023925/1). }
\address{Department of Mathematics, Imperial College London}
\email{
vincent.goverse@duke.edu, \quad
vincent.goverse21@imperial.ac.uk}
\author{Victor Kleptsyn}
\thanks{V.K. was supported in part by ANR Gromeov (ANR-19-CE40-0007) and by Centre Henri Lebesgue (ANR-11-LABX-0020-01).}
\address{CNRS, Institute of Mathematical Research of Rennes, UMR 6625 du CNRS, University of Rennes}
\email{victor.kleptsyn@univ-rennes.fr}

\keywords{Random dynamical systems, atomic, stationary measures,
physical, non-invertible, Thompson semigroup,
H\"older regularity}

\subjclass[2020]{Primary 37H12; Secondary 37C40, 37E10, 37A30}

\begin{document}
\begin{abstract}
    We construct an example of a random dynamical system on the circle, formed by maps that are only locally invertible, which possesses an atomic stationary measure~$\nu$. Moreover, this measure is physical: for Lebesgue-almost every initial point $x_0$, the Cesàro averages of its random trajectory almost surely converge to~$\nu$. This shows that the H\"older regularity of stationary measures, known for (non-measure-preserving) random dynamical systems formed by diffeomorphisms, cannot be generalized to this class of systems.
     We also provide some related examples, including ones where a stationary measure charges a proper submanifold, despite the absence of a closed common invariant submanifold.
\end{abstract}

\maketitle

\tableofcontents

\section{Introduction}

\subsection{Initial question and statement of the main result}

A classical argument shows that the stationary measures of random dynamical systems formed by homeomorphisms that do not admit common invariant finite sets are non-atomic. This argument is straightforward: assuming the opposite, it suffices to note that the set of atoms of highest weight must be a finite invariant set, giving a contradiction.
However, it heavily relies on the injectivity of the dynamics (though it was recently noticed in~\cite{mu1} that this condition can be weakened to the so-called $\mu$-injectivity, introduced in \cite{mu0}).

This non-atomicity statement can be generalized to the regularity control of stationary measures, established in different situations by many authors; see the works of Y. Guivarc'h, R. Aoun, Y.~Benoist, J.-F.~Quint~\cite{guivarch_produits_1990,benoist_random_2016,aoun_random_2020}. In particular, a recent result by A. Gorodetski, G. Monakov and the second author \cite{gorodetski_holder_2022} states that stationary measures for random dynamical systems formed by diffeomorphisms satisfy the H\"older regularity condition, under the assumptions that there are no common invariant measures and some moment conditions are satisfied. The latter result was also generalized by Monakov~\cite{monakov_log-holder_2024} for bi-H\"older homeomorphisms (instead of diffeomorphisms) or weaker moment conditions, establishing a weaker (log-H\"older) regularity estimate for the measure.

Let us recall that H\"older regularity of a measure refers to the following: there exist constants $C,\alpha>0$ such that
\[
    \mu(B_\varepsilon(x)) \leq C \varepsilon^\alpha
\]
for all $x$ and all $\varepsilon>0$. The supremum of such exponents $\alpha$ is sometimes called the Frostman dimension of $\mu$ \cite{MR4520027}.

Given all these generalizations, it seemed tempting to extend the results in yet another direction, namely to the non-invertible case, assuming that the maps are only \emph{locally} invertible. The purpose of this note is to show that this is actually impossible, providing an example of an \emph{atomic} stationary measure occurring in this setting. Namely, we have the following theorem.

\begin{theorem}[Main result]\label{thm:circle}
There exists a random dynamical system $(f_1,\dots,f_s;p_1,\dots,p_s)$ where $f_i$ are $C^{\infty}$ locally invertible maps of the circle $\Sc=\R/\Z$, and $p_i>0$ are the probabilities of their application, and a stationary measure
\[
\msp=\sum p_i (f_i)_* \msp,
\]
such that:
\begin{itemize}
\item The maps $f_i$ admit no common invariant measure, that is, there is no probability measure $\msp'$ on the circle such that for all $i=1,\dots,s$ one has $(f_i)_*\msp'=\msp'$.
\item The measure $\msp$ consists of a countable number of atoms,
\begin{equation}\label{eq:nu-atoms}
\msp=\sum_{j=0}^{\infty} m_j \delta_{a_j}, \quad a_j\in \Sc.
\end{equation}
\item Moreover, the measure $\msp$ is a ``physical'' measure of this system, in the sense that the distribution of a random trajectory of a Lebesgue-generic point converges to it. That is, for Lebesgue-a.e.\ $x_0\in \Sc$ and for a.e.\ sequence $\omega\in \{1,\dots,s\}^{\N}$ (w.r.t. the Bernoulli measure), for the trajectory
\[
x_{j+1}=f_{\omega_j}(x_{j}), \quad j=0,1,2,3,\dots,
\]
one has
\[
\frac{1}{n} \sum_{j=0}^{n-1} \delta_{x_j} \wto  \msp, \quad n\to \infty.
\]
\end{itemize}
\end{theorem}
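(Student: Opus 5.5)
The plan is to build the system so that the stationary measure sits on a countable invariant ``skeleton'' $A=\{a_j\}$, and the maps restricted to $A$ act as a positive recurrent Markov chain. The skeleton is modelled on dyadic rationals, in the spirit of Thompson-type piecewise maps that are then smoothed. The atoms are the stationary distribution $(m_j)$ of this chain. Non-invertibility is what makes this possible: several points of $A$ (and whole arcs around them) may be sent to the same atom. So mass can concentrate, and the ``atoms of maximal weight form a finite invariant set'' argument breaks down.

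\textbf{Step 1 (combinatorics of the skeleton).} Let $a_0=0$ and organise $A$ by ``level'' $\ell(a_j)$, say the dyadic depth. Choose locally invertible $C^\infty$ circle maps of degree $\geq 2$ so that:
\begin{itemize}
\item one map, an expanding degree-two map like $x\mapsto 2x$ modified near $0$, sends level $\ell$ to level $\ell-1$ and fixes $0$;
\item another map, of the type $x\mapsto x/2$ on a suitable arc, extended to a degree-two map, sends level $\ell$ to level $\ell+1$.
\end{itemize}
The probabilities are chosen so that the level performs a random walk on $\N$ with drift towards $0$, reflected at $0$. This walk is positive recurrent. Hence the chain on $A$ has a unique stationary distribution $(m_j)$ with exponential tails in the level, and $\msp=\sum m_j\delta_{a_j}$ is stationary. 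Absence of a common invariant measure is checked directly; for instance, one map has a unique invariant measure supported away from the fixed points of another.

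\textbf{Step 2 (contraction along the skeleton).} Arrange the derivatives at the atoms so that the stationary Lyapunov exponent $\sum_j m_j\sum_i p_i\log|f_i'(a_j)|$ is negative. For example, take $f_1'(0)$ very small, keeping $f_i'\neq0$ everywhere for local invertibility, while the expansion needed for degree $\geq2$ happens away from $A$. By the ergodic theorem for the positive recurrent chain, the derivative along a typical skeleton trajectory decays exponentially. A Pesin-type (random local stable manifold) argument then gives that points in a random neighbourhood of $a_{j}$ follow the skeleton trajectory with exponentially shrinking distance. One needs control of the $C^2$ norms, uniformly in the level, to get tempered neighbourhood sizes; exponential tails of $(m_j)$ help here.

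\textbf{Step 3 (capture of Lebesgue-a.e.\ point, then Cesàro convergence).} First, for Lebesgue-a.e.\ $x_0$, the random trajectory a.s.\ enters a fixed small neighbourhood $U$ of $0$ infinitely often. This follows from the expanding behaviour (ergodicity of Lebesgue for the expanding map) outside $U$. Second, each entry leads to capture with probability bounded below by Step 2. By Lévy's $0$--$1$ law, capture then happens a.s. After capture, $\operatorname{dist}(x_n,\text{skeleton trajectory})\to0$, and the Cesàro averages of the skeleton chain converge to $\msp$ by the ergodic theorem for positive recurrent chains. Hence $\frac1n\sum\delta_{x_j}\wto\msp$.

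I expect the main obstacle to be the interplay in Steps 2--3. The same maps must expand enough (degree $\geq2$, and reaching $U$ from generic points) yet contract on average along the skeleton. The capture neighbourhoods must also not shrink too fast at high levels. I would first try to make all contraction concentrated at $0$ and keep the level-changing maps affine of slopes $2$ and $1/2$ near $A$. Then the log-derivative along the skeleton is an explicit functional of the reflected random walk, and its sign can be computed exactly.
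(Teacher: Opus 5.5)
\textbf{Your Step~2 cannot be carried out on the skeleton of Step~1, and the rest of the argument depends on it.}

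Suppose $A$ is, as you describe, the dyadic-like tree of iterated $f_1$-preimages of $0$. Taking $f_1'(0)<1$ makes $0$ an attracting fixed point. Near $0$ the map $f_1$ is then an injective contraction, so no other point of $A$ lies there. The skeleton is no longer dense or dyadic-like.

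Suppose instead you keep the dense skeleton. Your second map acts on $A$ like a Thompson-semigroup element (``$x\mapsto x/2$ on an arc''). By density and continuity it is therefore a piecewise composition of $f_1$ and its inverse branches. At a breakpoint, the two adjacent compositions correspond to two paths in the tree (which has one loop at the root). These paths differ by $w\neq 0$ turns around the loop, so the one-sided derivatives differ by the factor $f_1'(0)^{w}$.
\begin{itemize}
\item $C^1$ gluing forces $f_1'(0)=1$, and $C^\infty$ gluing forces infinite-order tangency to the identity at $0$. This is the Ghys--Sergiescu mechanism your Step~1 silently needs when it says ``choose $C^\infty$ maps''.
\item Your suggested realisation, with slopes $2$ and $1/2$ near $A$ and contraction concentrated at $0$, is exactly the non-$C^1$ case.
\end{itemize}

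Once $f_1'(0)=1$, the derivative cocycle over the skeleton is a coboundary. Put $D(a):=(f_1^{\ell(a)})'(a)$. Since loops at $0$ contribute a factor $1$, you get $f_i'(a)=D(a)/D(f_i(a))$ for every $a\in A$. Hence $(F_n)'(a_0)=D(a_0)/D(a_n)$. The chain returns to $0$ infinitely often, so the skeleton Lyapunov exponent is exactly $0$, not negative. There is no hyperbolic contraction, and the Pesin-type capture of Steps~2--3 has nothing to act on.

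If you instead genuinely make $0$ attracting, local injectivity makes the accumulation set of $A$ a compact set forward-invariant under every $f_i$. That is a different construction, with its own smoothness, drift and repulsion issues, and your proposal does not carry it out.

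\textbf{The missing idea is that the capture mechanism must be neutral, not hyperbolic.} The paper keeps the dense skeleton $\DD_\psi$, with $\psi$ $C^\infty$-tangent to the identity at $0$ and $\psi'>1$ elsewhere. Physicality then comes from Thaler's theorem: the forward $\psi$-orbit of Lebesgue-a.e.\ $x_0$ spends asymptotically full time near $0$. The argument then runs as follows.
\begin{itemize}
\item The random trajectory of $x_0$ stays in the full $\psi$-orbit tree of $x_0$. It climbs the spine $\psi^k(x_0)$ with exponentially tailed time per level, by the same Foster--Lyapunov drift as on $\DD$.
\item Whenever the current spine point is near $0$ and the depth below it is at most $N$, $x_n$ is near $\DD_{\psi;N}$, because the inverse branches of $\psi$ are non-expanding.
\item A martingale Bernstein bound plus Borel--Cantelli make this hold almost surely.
\item Almost surely every limit point of $\bar\nu_n$ is stationary. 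The stationary measure on $\DD_\psi$ is unique because the doubling map is among the generators. Together these give convergence to $\nu$.
\end{itemize}
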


\begin{remark}
There is an example of a random dynamical system $(f_1,f_2;p_1,p_2)$, formed by two maps and satisfying the conclusions of Theorem~\ref{thm:circle}, where $f_1$ is a map of degree~$2$, and $f_2$ is a circle diffeomorphism. Graphs of two such maps $f_1,f_2$ are shown in Fig.~\ref{fig:GS-example} below.
\end{remark}

While Theorem~\ref{thm:circle} shows that Hölder regularity fails in general for
locally invertible maps, it is natural to ask whether it can be recovered under
additional assumptions.

\begin{question}
    Under what conditions on a random dynamical system formed by locally invertible
    (but not necessarily globally invertible) maps can one conclude that every
    stationary measure satisfies a Hölder regularity estimate?
\end{question}

\begin{figure}[ht]
    \centering
\includegraphics[height=4.5cm]{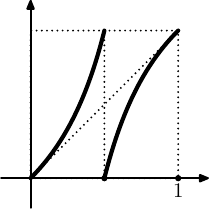} \qquad \includegraphics[height=4.5cm]{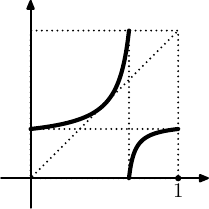}
    \caption{Smooth example for Theorem~\ref{thm:circle}: maps~$f_1$ (left) and $f_2$ (right).}
    \label{fig:GS-example}
\end{figure}

\subsection{Further discussion: measures charging submanifolds}

Another direction for generalization of Hölder estimates of stationary measures is to obtain an upper bound for a stationary measure of an $\eps$-neighbourhood of a proper \emph{submanifold} instead of a point. Such estimates are known due to Benoist-Quint~\cite{benoist_random_2016} for the stationary \emph{projective} dynamics for neighbourhoods of hyperplanes. Nonstationary analogues of such estimates would allow one to obtain a Central Limit Theorem for nonstationary products of matrices in arbitrary dimension (actually, this statement is only known for products of matrices in $\SL(2,\R)$, see~\cite{gorodetski-etal-2024-CLT}; in the latter case, the projective dynamics is one-dimensional, so the hyperplanes are actually points).

Our main example leads to other examples showing that, in full generality, a stationary measure need not satisfy Hölder-type bounds in a neighbourhood of a (closed) submanifold.
In fact, a submanifold can be of positive measure, even if the dynamics is invertible.
Such examples can be constructed using the same solenoid technique that is used to transform the circle doubling map into an invertible map (see for example \cite[Chapter 17.1]{katok_introduction_1995}). Namely, we have the following example.
\begin{example}\label{ex:torus}
Take $M$ to be the solid torus, $M=\Sc\times \D$, where $\D=\{|\cwnew|\le 1\}$ is the unit disc in~$\C$, and let $F_1,\dots,F_s: M\to M$ be defined by
\begin{equation}\label{eq:F-j-def}
    F_j(x,\cwnew) = (f_j(x), \eps \cwnew + \frac{1}{2} e^{2\pi i x}), \quad j=1,\dots,s,
\end{equation}
where $f_j$ are the maps from Theorem~\ref{thm:circle}. Then:
\begin{itemize}
\item For all sufficiently small $\eps$ the maps $F_j$ are diffeomorphisms onto their images.
\item There is no common $F_j$-invariant measure, and there is no common $f_j$-invariant measure on the space of discs $\{\D_a\}_{a\in\Sc}$.
\item There is a stationary measure $\hmsp$ on $M$ that is supported on a countable union $\bigcup_j \D_{a_j}$ of these discs, where $a_j$ are the atoms of the measure $\msp$ from the conclusion~\eqref{eq:nu-atoms} of Theorem~\ref{thm:circle}.
\item Moreover, the measure $\hmsp$ is the physical measure of this system: for Lebesgue-almost every point $b_0=(x_0,\cwnew_0)\in M$, the distribution of its random trajectory $b_{n+1}=F_{\omega_n}(b_{n})$ almost surely converges to~$\hmsp$,
\begin{equation}\label{eq:convergence-hmsp}
\frac{1}{n} \sum_{j=0}^{n-1} \delta_{b_j} \wto\hmsp, \quad n\to\infty.
\end{equation}
\end{itemize}
\end{example}
Example~\ref{ex:torus} falls within the class of those considered in~\cite{gorodetski_holder_2022} (as it now consists of invertible maps), and thus its stationary measure satisfies the H\"older regularity property; however, the measure of the discs~$\D_a$ may be positive. In particular, the measure of a neighbourhood of a disc~$\D_{a_j}$ does not tend to zero. However, this example has one drawback: the solid torus itself is not a closed manifold, and the maps are only diffeomorphisms onto their images.

We postpone the discussion of technical details for this and the following examples (that is, Examples~\ref{ex:torus}--\ref{ex:sphere2}) to Section~\ref{s:examples-proofs} below, for now concentrating on the general picture.

To address the non-closedness drawback, consider the solid torus $M=\Sc\times \D$ as a subset of the 3-sphere $\mathbb{S}^3$. As we will see in Section~\ref{s:examples-proofs} below, there is a modification of the maps in Example~\ref{ex:torus}, that is formed by skew products of the form
\begin{equation*}
\hF_j(x,\cwnew)=(f_j(x),L_x(\cwnew)+\frac{1}{2}e^{2\pi i x}),
\end{equation*}
where $L_x$ are sufficiently strongly contracting linear maps, for which all the maps $\hF_j:M\to M$ can be extended to diffeomorphisms $\tilde{F}_j:\mathbb{S}^3\to \mathbb{S}^3$. Moreover, one can ensure that for this extension the random trajectories, starting from any point, almost surely fall into~$M$. Hence, we have the following modification of Example~\ref{ex:torus}:

\begin{example}\label{ex:sphere}
There exist diffeomorphisms $\tF_j:\Sph\to \Sph$, probabilities $p_j$ of their application and a stationary measure $\hmsp$ of the random dynamical system $(\tF_1,\dots,\tF_s;p_1,\dots,p_s)$, such that:
\begin{itemize}
\item The measure $\hmsp$ is supported inside the solid torus $M=\Sc\times \D \subset \Sph$, and therein is supported on a countable number of discs,
\[
\hmsp \left( \bigcup\nolimits_j \D_{a_j}\right) =1.
\]
\item The maps $\tF_i$ admit no common invariant measure,
and there is no common $\tF_i$-invariant measure on the space of discs $\{\D_a\}_{a\in\Sc}$.
\item The measure $\hmsp$ is a ``physical'' measure of this system: for Lebesgue-a.e.\ initial point $b_0\in \Sph$ and almost every random
trajectory $b_{n+1}=\tF_{\omega_n}(b_{n})$,
one has
\[
\frac{1}{n} \sum_{j=0}^{n-1} \delta_{b_j} \wto  \hmsp, \quad n\to \infty.
\]
\end{itemize}
\end{example}

We conclude with two examples of a different nature. Namely, in Examples~\ref{ex:torus} and~\ref{ex:sphere} the stationary measure of a two-dimensional proper submanifold was positive (despite the absence of a common invariant manifold). One can ask whether there exists an example for which the stationary measure of some one-dimensional submanifold is positive, again despite the absence of a common invariant manifold. The following two examples provide a positive answer to this question.

Take the middle-third standard Cantor set $C\subset [0,1]$, invariant under the maps
\[
\hat f_1: x \mapsto \frac{1}{3}x, \quad \hat f_2: x \mapsto \frac{x+2}{3}.
\]
Taking $p_1=p_2=1/2$, one gets a unique $(\hat f_1,\hat f_2;p_1,p_2)$-stationary
measure~$\msp$ on~$C$.
Now, there is a $C^{\infty}$-function $\theta:[-1,2]\to [0,1/2]$, vanishing exactly on the set $C$,
\[
\theta^{-1}(\{0\})=C
\]
and with derivative $|\theta'(x)| < 1/2$
(such a function exists for any closed subset of~$\R$); see Fig.~\ref{fig:theta}.
\begin{figure}
    \centering
    \includegraphics[width=0.8\linewidth]{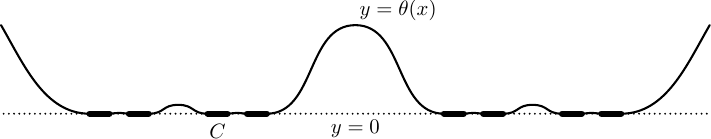}
    \caption{Cantor set $C$, supporting the stationary measure $\msp$, and the (vertically scaled) graph of the function $y=\theta(x)$.}
    \label{fig:theta}
\end{figure}
The following example is an analogue of Example~\ref{ex:torus}:
\begin{example}\label{ex:rectangle}
Take four maps of the rectangle $\Pi=[-1,2]\times [-1,1]$ to itself,
\[
F_{i,j}(x,y) = (\hat f_i(x), \tfrac{1}{2} y + (-1)^j \theta(x)), \quad i,j=1,2,
\]
and take the probabilities of their application to be
$p_{i,j}=\tfrac{1}{4}$. These maps are contracting diffeomorphisms onto their
respective images, and the unique stationary measure $\hmsp$  of the random dynamical system~$(F_{i,j};p_{i,j})$ is the push-forward of the measure~$\msp$ under the inclusion map $x\mapsto (x,0)$ from $[0,1]$ to $\Pi$.
This system has
no common invariant measure, and no proper submanifold is invariant under all~$F_{i,j}$. Meanwhile, the measure~$\hmsp$ is supported on the line $\{y=0\}$.
\end{example}

In the same way as in Example~\ref{ex:sphere}, one can consider $\Pi$ as a subset of the sphere $\mathbb{S}^2$ and extend the maps $F_{i,j}$ from Example~\ref{ex:rectangle} to diffeomorphisms $\tF_{i,j}$ of the sphere. Again, we can ensure sufficient North--South behaviour so that the (random) trajectory of every initial point almost surely falls into $\Pi$ after some number of iterations. This leads to the following example.

\begin{example}\label{ex:sphere2}
There exist diffeomorphisms $\tF_{i,j}:\mathbb{S}^2\to \mathbb{S}^2$ and a stationary measure $\hmsp$ of the random dynamical system $(\tF_{i,j};p_{i,j})_{i,j=1,2}$, such that:
\begin{itemize}
\item The measure $\hmsp$ is supported inside the rectangle $\Pi=[-1,2]\times [-1,1]$, and therein is supported on the line $\{y=0\}$;
\item The maps $\tF_{i,j}$ admit no common invariant measure. Moreover, there is no common $\tF_{i,j}$-invariant closed submanifold of~$\mathbb{S}^2$.
\item The measure $\hmsp$ is the unique stationary measure of this system. In particular, for every
initial point $b_0\in \mathbb{S}^2$ and almost every random
trajectory $b_{n+1}=\tF_{\omega_n}(b_{n})$, one has
\[
\frac{1}{n} \sum_{j=0}^{n-1} \delta_{b_j} \wto \hmsp, \quad n\to \infty.
\]
\end{itemize}
\end{example}

Given, on one hand, the theorems on the regularity of stationary measures that are already known, and on the other hand, Examples~\ref{ex:torus}--\ref{ex:sphere2} above, it is very interesting to address the following question.

\begin{question}
    What are the best (or most natural) conditions under which one can conclude that a stationary measure $\msp$ of a random dynamical system does not charge any proper closed submanifold~$N$? Or under which, for any closed submanifold~$N$, the measure of its $\eps$-neighbourhood~$U_{\eps}(N)$ satisfies a H\"older-type bound $\msp(U_{\eps}(N))< C \eps^{\alpha}$ for some positive constants~$C,\alpha$?
\end{question}

\subsection{Plan of the paper and sketch of the proof}
We recall a few general notions in Section~\ref{sec:prelim}. Then, we start the proof of Theorem~\ref{thm:circle} in Section~\ref{sec:thompson} by first constructing a piecewise affine example: a random dynamical system that already possesses an atomic stationary measure, though there is no common invariant measure for its maps.

Such an example is obtained by combining the doubling map $\varphi:x\mapsto 2x \mod 1$ with some other maps from the Thompson group $T$, with the choice of the probabilities that strongly favours the application of~$\varphi$. As a result, we will note that it possesses an atomic stationary measure, supported on the set of dyadic rational points of the circle (that is, the orbit of the point $x_0=0$). Moreover, we consider random dynamical systems formed by Thompson-like maps, and establish in Theorem~\ref{thm:atomic} a sufficient condition for such a system to possess an atomic stationary measure. For that, it suffices that the expectation of the logarithm of the derivative on each of the affinity intervals is strictly positive.

Next, in Section~\ref{ss:G-S}
we apply the Ghys--Sergiescu smooth realization technique (see~\cite{ghys_sur_1987}). Namely, the maps from the Thompson semigroup are piecewise formed by compositions of the circle doubling map $\varphi$ and its inverse branches. Replacing $\varphi$ by a degree-two map $\psi$ that is $C^{\infty}$-tangent to the identity at its fixed point $0$, one transforms the semigroup into a semigroup of $C^{\infty}$ locally invertible maps. This allows us to consider smooth random dynamical systems, still possessing atomic stationary measures (supported on the image of the dyadic rational points under the conjugating map).

Then, the conjugated example piecewise consists of compositions of $\psi$ and branches of its inverse, and the random dynamics favours applying $\psi$ over its inverse. It is known~\cite{Thaler1980,MR1452184} that under the iterations of the map $\psi$ (non-strictly expanding and having a parabolic fixed point~$0$) the distribution of a Lebesgue-generic point $x_0$ converges to the Dirac measure $\delta_0$; roughly speaking, it is relatively easy for a point to be sent near~$0$, while it takes a lot of time to get away from it. Now, the \emph{random} iterations that favour $\psi$ over its inverses mostly follow the mere iterations of $\psi$, occasionally going back along one of the preimages; hence, the distribution of such a random trajectory is almost surely concentrated on the full orbit of~$0$. We formalize this hand-waving argument in Section~\ref{ss:physical}, thus concluding that the constructed measure~$\msp$ is indeed physical, and completing the proof of Theorem~\ref{thm:circle}.

We present some concluding remarks in Section~\ref{ss:concluding}: for a particularly symmetric version of our example, it turns out that for any invariant measure of $\varphi$ we can describe the corresponding stationary measure of this symmetric version explicitly; see \eqref{eq:nu-Q}.

Finally, Section~\ref{s:examples-proofs} is devoted to the technical details for Examples~\ref{ex:torus}--\ref{ex:sphere2}.

\section{Preliminaries}\label{sec:prelim}

\subsection{Random dynamical systems}

In this paper, we interpret a \emph{random dynamical system} as a measure $\mgr$ on the space of (continuous) maps from some set~$X$ to itself. (Naturally, there are other ways to introduce it, as a cocycle or a skew product over a shift, as well as many generalizations, which we leave outside the scope of the present work). One then considers the sequence of random compositions
\begin{equation}\label{eq:Markov}
F_n=g_n\circ\dots\circ g_1 = g_n \circ F_{n-1}, \quad F_0=\id,
\end{equation}
where the maps $g_i$ are chosen i.i.d. with respect to the measure~$\mgr$; in other words, the sequence $(g_1,\dots,g_n,\dots)$ is distributed w.r.t. the measure $\mathbb{P}=\mgr^{\N}$.

In the context of this note, the measure $\mgr$ will always be finitely supported: we will choose a finite number of maps $f_1,\dots,f_s$ and the probabilities $p_i>0$ of their application, satisfying $p_1+\dots+p_s=1$. Then on each step we will independently choose to apply one of the maps $f_i$ with the corresponding probabilities; we will abbreviate this by saying that we are considering the random dynamical system $(f_1,\dots,f_s;p_1,\dots,p_s)$. In the general context above, it corresponds to choosing the measure
\[
\mgr= p_1 \delta_{f_1}+ \dots +  p_s \delta_{f_s};
\]
the compositions~\eqref{eq:Markov} can then be written as
\[
F_n=f_{\omega_{n-1}}\circ \dots \circ f_{\omega_0},
\]
where $\omega_j$ are i.i.d. random variables, taking value $i=1,\dots,s$ with the probability~$p_i$.

A measure~$\msp$ on~$X$ is \emph{stationary} for the random dynamical system given by the measure~$\mgr$ if it is equal to the average of its pushforward images,
\[
\msp = \int (f_* \msp) \, d\mgr(f).
\]
In our case, this corresponds to the relation
\[
\msp = \sum_{j=1}^s p_j (f_j)_* \msp.
\]
This condition is equivalent to saying that the measure $\msp$ is a stationary measure for the Markov chain $x_{n}=g_n (x_{n-1})$ with the maps $g_i$ chosen independently w.r.t. the measure~$\mgr$.

\subsection{Thompson group}\label{s:prelim-Thompson}

Denote by $\DD$ the set of dyadic rational points on the circle: let
\[
\DD=\Z\left[\tfrac{1}{2}\right]/\Z \subset \Sc = \R/\Z.
\]

The following group was introduced by R.~J.~Thompson in unpublished handwritten notes; see~\cite{MR1426438, ghys_sur_1987}.

\begin{definition}
    The \emph{Thompson group}~$\TT$ is a group of orientation-preserving homeomorphisms of the circle $\Sc=\R/\Z$, consisting of maps~$f$ that satisfy the following conditions:
\begin{itemize}
    \item $f$ is piecewise-affine, and on each interval of affinity the map $f$ has the form
    \begin{equation}\label{eq:dyadic}
        f(x)=2^m x + c
    \end{equation}
    for some $m\in \Z$, $c=\frac{a}{2^q}\in \DD$.
    \item Endpoints of the affinity intervals are dyadic rationals.
\end{itemize}
\end{definition}
Next, following a seminal paper by \'E.~Ghys and V.~Sergiescu~\cite{ghys_sur_1987}, let us recall how this group can also be described in terms of the doubling map
\begin{equation}
\label{eq:phi-0}
    \varphi(x) = 2x \mod 1.
\end{equation}

To do so, one first notices that the point~$0$ is the fixed point of $\varphi$. The set of dyadic rational points of the circle is the full (that is, including taking preimages) $\varphi$-orbit of~$0$. These points, joined by edges corresponding to the application of $\varphi$, form a tree with a loop attached at its root~$0$: see Fig.~\ref{fig:tree}, left.
\begin{figure}[ht!]
    \centering
    \includegraphics[width=0.37\linewidth]{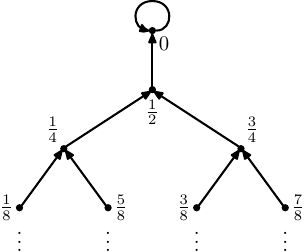}
    \quad
    \includegraphics[width=0.5\linewidth]{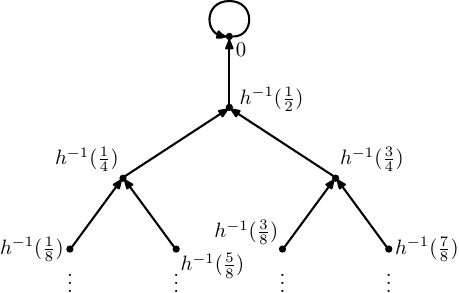}
    \caption{Left: graph of a dyadic tree representing the preimages of zero under the doubling map \( \varphi \). Right: preimages of $0=\psi(0)$ under the map \( \psi \); arrows correspond to the application of~$\psi$. }

    \label{fig:tree}
\end{figure}
Finally, condition~\eqref{eq:dyadic} is equivalent to saying that, on each of its affinity intervals, the map $f$ can be written as a composition of iterations of $\varphi$ and appropriately chosen branches of its inverse.

\section{Piecewise linear maps}\label{sec:thompson}
\subsection{A piecewise linear example}

We start by constructing a partial example towards Theorem~\ref{thm:circle}. Namely, we will construct a system of \emph{piecewise-affine} locally invertible maps on the circle, for which there is no common invariant measure and which admits an atomic stationary measure. We have the following example:

\begin{example}\label{ex:PL}
Consider a random dynamical system $(f_1,f_2;p_1,p_2)$, where
\begin{align*}
    f_1(x) &= 2x \mod 1 ,\\
    f_2(x) &=
    \begin{cases}
    \frac{x}{2}+\frac{1}{4}, &\text{ for } x \in [0,1/2), \\
    2x-\frac{1}{2}, &\text{ for } x \in [1/2,3/4), \\
    x - \frac{3}{4}, &\text{ for } x \in [3/4,1);
    \end{cases}
\end{align*}
see Fig.~\ref{fig:PL-example}. Assume that the probabilities of their application are such that $p_1>\frac{1}{2}$; for instance, one can take
\[
p_1=\frac{3}{4},\quad p_2=1-p_1 = \frac{1}{4}.
\]
Then, as we will see below (see Theorem~\ref{thm:atomic}), this random dynamical system admits an atomic stationary measure, though it has no common invariant measure.
\end{example}

\begin{figure}[ht!]
    \centering
\includegraphics[height=4.5cm]{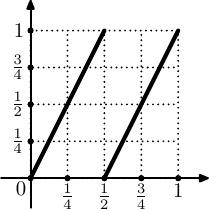} \qquad
\includegraphics[height=4.5cm]{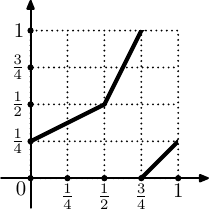}
    \caption{Piecewise affine example for Example~\ref{ex:PL}: maps~$f_1$ (left) and $f_2$ (right).}
    \label{fig:PL-example}
\end{figure}

The absence of a common invariant measure for the system described in Example~\ref{ex:PL} can be checked in a straightforward way:
\begin{lemma}\label{lem:nocommon}
    There is no common invariant measure for $f_1$ and $f_2$.
\end{lemma}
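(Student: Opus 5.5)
The plan is to pin down the invariant measures of $f_2$ alone, and then check that none of them is $f_1$-invariant. First I would verify that $f_2$ is an orientation-preserving circle homeomorphism. The three pieces map $[0,1/2)$, $[1/2,3/4)$ and $[3/4,1)$ onto $[1/4,1/2)$, $[1/2,1)$ and $[0,1/4)$ respectively, and they match at the breakpoints: $f_2(1/2)=1/2$ from both sides, $f_2(3/4^-)=1\equiv 0=f_2(3/4)$, and $f_2(1^-)=1/4=f_2(0)$. So $f_2\in\TT$.

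Next I would find the fixed points of $f_2$ by solving $f_2(x)=x$ on each piece.
\begin{itemize}
\item On $[0,1/2)$, $x/2+1/4=x$ gives $x=1/2$, which lies outside this piece.
\item On $[1/2,3/4)$, $2x-1/2=x$ gives $x=1/2$.
\item On $[3/4,1)$, $x-3/4\equiv x$ has no solution modulo $1$.
\end{itemize}
So $1/2$ is the unique fixed point of $f_2$.

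The key step is then the standard fact that an orientation-preserving circle homeomorphism whose fixed point set is $\{1/2\}$ has $\delta_{1/2}$ as its only invariant probability measure. I would prove this directly. Lift to $\tilde f_2:\R\to\R$ with $\tilde f_2(1/2)=1/2$. On the open interval $J=(1/2,3/2)$, the function $\tilde f_2(x)-x$ is continuous and has no zeros, so it has a constant sign; evaluating at $x=0.75$ gives $\tilde f_2(0.75)=1$, so the sign is positive. Hence for every $y\in J$ the orbit $\tilde f_2^n(y)$ increases monotonically to $3/2$, and by the same argument $\tilde f_2^{-n}(y)$ decreases to $1/2$.

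For a fixed $y\in J$, the half-open fundamental domain $I=[y,\tilde f_2(y))$ has pairwise disjoint images $\tilde f_2^n(I)$, $n\in\Z$, and these images exhaust $J$. If $\msp'$ is $f_2$-invariant, all these images have the same $\msp'$-measure, and since they are disjoint with total mass at most $1$, that measure is $0$. Therefore $\msp'(\Sc\setminus\{1/2\})=0$, that is, $\msp'=\delta_{1/2}$.

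Finally, a common invariant measure would then have to be $\delta_{1/2}$. But $(f_1)_*\delta_{1/2}=\delta_{f_1(1/2)}=\delta_0\neq\delta_{1/2}$, so no common invariant measure exists. The only step requiring some care is the wandering-domain argument above, and it is routine. An alternative is to cite Poincaré's theorem: $f_2$ has rotation number $0$, so every invariant measure is supported on the fixed point set $\{1/2\}$.
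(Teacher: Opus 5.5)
Your proof is correct and follows the same route as the paper. You show that $1/2$ is the unique fixed point of $f_2$ and that every other orbit is attracted to it, so $\delta_{1/2}$ is the only $f_2$-invariant measure, and this measure is not $f_1$-invariant because $f_1(1/2)=0$. Your fundamental-domain argument simply makes explicit the step the paper states without proof, namely that convergence of all other orbits to $1/2$ forces any invariant measure to be $\delta_{1/2}$.
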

\begin{proof}
The map $f_2$ has a unique fixed point $x_0=\frac{1}{2}$, and the orbit of any other point converges to it under the iterations of~$f_2$. Hence, the only possible candidate for a common invariant measure is the Dirac measure~$\delta_{1/2}$. However, the point~$\frac{1}{2}$ is not fixed by~$f_1$, and hence this measure is not $f_1$-invariant. This implies that there are no common $(f_1,f_2)$-invariant measures.
\end{proof}

Now, the existence of an atomic stationary measure for the system described in Example~\ref{ex:PL} actually fits into a more general context, which we will describe in the next section.

\subsection{General framework: Thompson semigroup}
Let us introduce the following class of maps:
\begin{definition}
    The \emph{Thompson semigroup}~$\Ts$ is a semigroup of orientation-preserving, continuous and piecewise-affine maps of the circle $\Sc=\R/\Z$, consisting of maps $f$ that satisfy the following conditions:
    \begin{itemize}
        \item On each interval of affinity the map $f$ has the form
        \begin{equation}\label{eq:dyadic-aff}
            f(x)=2^m x + c
        \end{equation}
        for some $m\in \Z$, $c\in \DD$.
        \item Endpoints of the affinity intervals are dyadic rationals.
    \end{itemize}
\end{definition}

It is easy to see that these maps indeed form a semigroup, and that the action of any of these maps preserves the set of dyadic rational points of the circle~$\DD$, in the same way as for the Thompson group. The \emph{Thompson group}~$\TT$ is the subgroup of the invertible elements of this semigroup:
\[
\TT=\{ f\in \Ts \mid f \in \Homeo_+(\Sc)\}.
\]

Now, consider a random dynamical system $(f_1,\dots,f_s;p_1,\dots,p_s)$, formed by such maps. We introduce the following property.

\begin{definition}
    A random dynamical system $(f_1,\dots,f_s;p_1,\dots,p_s)$, formed by maps $f_i$ from the Thompson semigroup, is \emph{expanding on average} if for every point $x\in\Sc\setminus \DD$ one has
    \[
        \sum_j p_j \log f_j'(x) >0.
    \]
    Choosing a partition of the circle into common affinity intervals $J_i, \, i=1,\dots,N$ for all the maps $f_j$ simultaneously, and writing for each of these intervals
    \begin{equation}\label{eq:simultaneous}
        f_j(x) = 2^{m_{ij}}x +c_{ij}, \quad x\in J_i, \quad j=1,\dots,s,
    \end{equation}
    this condition can be equivalently rewritten as
    \begin{equation}\label{eq:drift}
        \forall i=1,\dots,N \quad \sum_{j=1}^s p_j m_{ij}>0.
    \end{equation}
\end{definition}

For such systems, we have the following result:

\begin{theorem}\label{thm:atomic}
    Assume that a random dynamical system $(f_1,\dots,f_s;p_1,\dots,p_s)$ is formed by maps from the Thompson semigroup $f_1,\dots,f_s\in \Ts$, and that this system is expanding on average. Then this system admits an atomic stationary measure~$\msp$, supported on the set of dyadic rational points~$\DD$. If, moreover, the doubling map $\varphi$ is one of the maps~$f_i$, then the atomic stationary measure supported on $\DD$ is unique.
\end{theorem}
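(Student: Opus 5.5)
The plan is to reduce everything to a Markov chain on the countable set $\DD$. Since every $f\in\Ts$ maps $\DD$ into itself, the random dynamical system restricted to $\DD$ is a Markov chain with transition probabilities $P(x,y)=\sum_{j:\,f_j(x)=y}p_j$. A probability measure $\msp$ on $\DD$ satisfies $\msp=\sum p_j(f_j)_*\msp$ exactly when it is a stationary distribution of this chain. So I will prove that the chain is positive recurrent, using a Foster--Lyapunov argument with the ``dyadic level'' as Lyapunov function.

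For $x\in\DD$ let $\ell(x)$ be the minimal $q\ge 0$ with $x\in 2^{-q}\Z/\Z$. Fix a common partition into affinity intervals $J_i$ with dyadic endpoints, as in~\eqref{eq:simultaneous}. Let $Q_0$ be larger than the levels of all the endpoints and of all the constants $c_{ij}$, plus $\max|m_{ij}|$.

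\emph{Key claim:} if $\ell(x)=q>Q_0$, then $x$ lies in the interior of some $J_i$, and $\ell(f_j(x))=q-m_{ij}$ exactly. Indeed, $2^{m_{ij}}x$ has level $q-m_{ij}$, which exceeds $\ell(c_{ij})$, so adding $c_{ij}$ does not change the level.

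Since there are finitely many intervals and the $m_{ij}$ are integers, condition~\eqref{eq:drift} gives a uniform $\delta>0$ with $\sum_j p_j m_{ij}\ge\delta$ for all $i$. Hence, on $\{\ell>Q_0\}$,
\[
\E\bigl[\ell(x_{1})-\ell(x_0)\mid x_0=x\bigr]\le-\delta ,
\]
and the jumps of $\ell$ are bounded by $M=\max|m_{ij}|+Q_0$ everywhere. The finite set $F=\{x\in\DD:\ell(x)\le Q_0\}$ is therefore a Foster set. The standard Foster criterion gives $\E_x[\tau_F]\le \ell(x)/\delta+C$, and $\sup_{y\in F}\E_y[\tau_F^+]<\infty$, the latter because one step from $F$ lands at level at most $Q_0+M$.

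Existence then follows by a standard argument. Start the chain at $0$ and take the Cesàro averages $\frac1n\sum_{k<n}\Prob(x_k\in\cdot)$. The Lyapunov bound gives uniform control of the expected time spent at levels above $L$, so these averages form a tight family of probability measures on the discrete set $\DD$. Any limit point is a probability measure on $\DD$, and it is stationary because the transition kernel is given by finitely many maps, so passing to the limit commutes with $\sum p_j(f_j)_*$ pointwise on atoms. Equivalently, one can take the stationary distribution of the chain induced on the finite set $F$ and lift it by Kac's formula, which uses the finite return times. Either way we obtain an atomic stationary measure supported on $\DD$.

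For uniqueness when $\varphi=f_{i_0}$: every $x\in\DD$ satisfies $\varphi^{\ell(x)}(x)=0$, so $0$ is reachable from every state. If $\msp$ is a stationary probability on $\DD$ with $\msp(x)>0$, then stationarity gives $\msp(0)\ge p_{i_0}^{\ell(x)}\msp(x)>0$. So every stationary distribution charges $0$ and is supported on the recurrent communicating class of $0$; there is only one such class, since $0$ is accessible from everywhere. The stationary distribution of an irreducible positive recurrent class is unique, namely $\msp(y)$ equals the expected number of visits to $y$ between returns to $0$, divided by $\E_0\tau_0$. This gives uniqueness.

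The main obstacle is the bookkeeping in the key claim. One must handle points of low level, which may sit at breakpoints where $f_j'$ is undefined or where adding $c_{ij}$ lowers the level. I handle this by putting all such points into the finite set $F$ and using only the bounded-jump property there. Beyond that, one must make sure the tightness or Kac argument genuinely produces a probability measure, rather than one that escapes to infinitely high levels.
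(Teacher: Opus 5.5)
Your proposal is correct and follows essentially the paper's route: you restrict to the countable Markov chain on $\DD$, use the dyadic level (the paper's $\dT$) as a Lyapunov function whose drift at high levels is $-\sum_j p_j m_{ij}<0$, apply a Foster--Lyapunov criterion for existence, and get uniqueness from the accessibility of $0$ via $\varphi$. The only real difference is that you apply the additive Foster criterion to $\ell$ itself, together with bounded jumps and a Kac/induced-chain construction, whereas the paper uses the geometric drift of $V=e^{\theta\dT}$; that choice also gives the exponential hitting-time tails of Lemma~\ref{l:exptail} needed later.
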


\begin{remark}
    The system, described in Example~\ref{ex:PL}, satisfies the assumptions of Theorem~\ref{thm:atomic}, and hence admits an atomic stationary measure, supported on~$\DD$.
\end{remark}

\subsection{Random dynamics on \texorpdfstring{$\DD$}{D}}

This section is devoted to the proof of Theorem~\ref{thm:atomic}.

Let $\dT:\DD\to \R_{\geq 0}$  be the function on the set of dyadic rationals, defined by
\[
\dT(x) = \min\{n\ge 0: \varphi^n(x)=0\} = \begin{cases}
    0, & x=0 \\
    n, & x=\frac{2k-1}{2^n}.
\end{cases}
\]
In other words, $\dT(x)$ is the distance from the vertex $x$ to the root~$0$ in the tree shown in Figure~\ref{fig:tree}.

\begin{proof}[Proof of Theorem~\ref{thm:atomic}]
    Note first that by the definition of the Thompson semigroup $\Ts$, each of the maps $f_i$ preserves the set of the dyadic rationals~$\DD$. Hence, we can consider the restriction to the initial random dynamical system on this (countable) invariant set, that we now consider with a discrete topology.

    The following lemma states that the induced random walk on~$\DD$ has a drift towards the root.

    \begin{lemma}\label{cl:decreasingn}
        Assume that a system $(f_1,\dots,f_s;p_1,\dots,p_s)$ satisfies the assumptions of Theorem~\ref{thm:atomic}. Then there exist $r>0$ and $C_0\geq1$ such that for every $x\in \DD$ with $\dT(x)>C_0$ one has
        \begin{equation}\label{eq:submartingale}
            \E_{\mgr} \dT(f(x)) \le \dT(x) -r
        \end{equation}
    \end{lemma}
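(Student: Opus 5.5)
Since $\varphi^{\dT(x)}(x)=0$, the quantity $\dT(x)$ is the $2$-adic valuation of the denominator of $x$: for $x=\frac{2k-1}{2^n}$ with $n\ge1$ we have $\dT(x)=n$. So I would prove the lemma by computing how each affine piece changes the denominator. Take a point $x\in\DD$ in the interior of a common affinity interval $J_i$. There $f_j(x)=2^{m_{ij}}x+c_{ij}$ with $c_{ij}=a_{ij}/2^{q_{ij}}\in\DD$. Let $Q$ be the maximum of all the $q_{ij}$, taken over the finitely many intervals and maps. If $\dT(x)=n>Q+\max_{i,j}|m_{ij}|$, then $2^{m_{ij}}x$ has odd numerator over the denominator $2^{n-m_{ij}}$. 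Because $n-m_{ij}>Q\ge q_{ij}$, adding $c_{ij}$ does not change the $2$-adic valuation of the denominator, and reducing mod $1$ changes nothing either. Hence
\[
\dT(f_j(x))=\dT(x)-m_{ij}\qquad\text{exactly.}
\]

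Next I would deal with the endpoints of the affinity intervals. The endpoints of the $J_i$ form a finite set of dyadic rationals, and each of them has $\dT$ bounded by some constant $E$. So if $\dT(x)>E$, then $x$ is not an endpoint and lies in the interior of a unique $J_i$. The computation above applies, and averaging over $j$ gives
\[
\E_{\mgr}\dT(f(x))=\dT(x)-\sum_j p_j m_{ij}.
\]
Now put $r=\min_{i}\sum_j p_j m_{ij}$. This is a minimum of finitely many numbers, each positive by the expanding-on-average condition \eqref{eq:drift}, so $r>0$. Set $C_0=\max(1,E,Q+\max|m_{ij}|)$. Then \eqref{eq:submartingale} holds for every $x$ with $\dT(x)>C_0$, and in fact with equality.

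The step that needs the most care is the claim that adding $c_{ij}$ cannot lower the valuation of the denominator. This works because the translation has a strictly smaller power of $2$ in its denominator than the term $2^{m}x$, so the sum still has odd numerator over $2^{n-m}$. It is also where the threshold $C_0$ has to be chosen large enough to dominate all the constants coming from the finite data of the maps. I would also check the edge case where the reduction mod $1$ could give $0$: it cannot, since the denominator is $2^{n-m}$ with $n-m>0$.
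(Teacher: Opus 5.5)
Your proof is correct and is essentially the paper's argument. The paper also computes $\dT(2^m x + a/2^q) = \dT(x) - m$ once $\dT(x) > q+m$, chooses thresholds $C_i$ on each common affinity interval, and sets $r = \min_i \sum_j p_j m_{ij}$ and $C_0 = \max_i C_i$; your explicit exclusion of the finitely many breakpoints through the bound $E$ is a small piece of extra care that the paper leaves implicit.
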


    \begin{proof}
        Note that for a map $f(x)=2^m x + \frac{a}{2^q}$ and a point $x=\frac{b}{2^n}\in \DD$, one has
        \[
            f(x)= \frac{b}{2^{n-m}}+ \frac{a}{2^q},
        \]
        and hence $\dT(f(x))=\dT(x)-m$, once $n=\dT(x)>q+m$.

        Consider now intervals~$J_i$, $i=1,\dots, N$, on each of which all the maps $f_j$ are simultaneously affine and are given by~\eqref{eq:simultaneous}. For each such interval, there exists $C_i$ such that once $x\in J_i$ and $\dT(x)>C_i$, one has
        \[
            \forall j=1,\dots,s \quad \dT(f_j(x))=\dT(x) - m_{ij}.
        \]
        Now, $r_i:=\sum_j p_j m_{ij}$ is positive due to the assumption of expansion on average~\eqref{eq:drift}. Finally, take $r:=\min_i r_i>0$ and $C_0:=\max_i C_i$. Then, for every $x\in \DD$ with $\dT(x)>C_0$, taking an interval $J_i \ni x$, one gets the desired
        \[
            \E_{\mgr} \dT(f(x)) = \sum_{j=1}^s p_j \cdot (\dT(x) -m_{ij}) = \dT(x) - r_i \le \dT(x)-r. \qedhere
        \]
    \end{proof}

    This drift towards the root suffices to conclude that the Markov chain $x_{n+1}=f_{\omega_n}(x_{n})$ on the (countable) set $\DD$ admits a stationary measure. We start with an informal explanation: as we will see, in that case, the (random) hitting time of a finite ``core'' part $\{x\in \DD \mid \dT(x)\le C_0\}$ has exponentially decreasing tails. In particular, this tail bound guarantees that the distribution of random trajectories, starting from $x_0\in \DD$, stays tight. Thus, a stationary measure can be found by the standard averaging procedure: considering Cesàro averages of the distributions of $x_t$, and taking any accumulation point of these measures (the tightness guarantees that it will be a probability measure, as no mass can ``escape to infinity'').

    The formal way to ensure such a drift is checking what is called a \emph{Foster--Lyapunov drift condition} -- consisting of existence of a \emph{Margulis function} (the terminology slightly varies in different domains).
    Roughly speaking, a \emph{Margulis function} is a function that sufficiently ``decreases on average'' outside some compact part of the space of states. Its existence implies that the chain returns to this compact part sufficiently often, thus guaranteeing the existence of a stationary measure. See, for example, Theorem~1.3.1 in Meyn--Tweedie~\cite{MR1287609}. In our setting, this condition states that if there exists a function $V:\DD\to {[1,\infty)}$ such that
    \begin{enumerate}
        \item\label{i:compact} $V^{-1}([1,c])$ is compact for every $c\in [1,\infty)$,
        \item\label{i:gamma} there exist $\gamma<1$ and $C$ such that $\E_{\mgr} V(f(x)) \le \gamma \cdot V(x)$ once $V(x)>C$,
    \end{enumerate}
    then there exists a stationary probability measure on~$\DD$. If, in addition, the chain is irreducible, then it is positive recurrent and the stationary probability measure is unique.

    The following lemma checks this condition:
    \begin{lemma}\label{l:applying-FL}
        For all sufficiently small $\theta>0$, the function~$V:\DD\to {[1,\infty)}$, defined by
        \begin{equation}\label{eq:V-def}
            V(x)=e^{\theta \dT(x)},
        \end{equation}
        satisfies the conditions~\eqref{i:compact} and~\eqref{i:gamma} above.
    \end{lemma}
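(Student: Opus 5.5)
Condition~\eqref{i:compact} is immediate, and condition~\eqref{i:gamma} follows from Lemma~\ref{cl:decreasingn} by a Taylor expansion of the exponential. For~\eqref{i:compact}: since $\DD$ carries the discrete topology, compact means finite. The set $V^{-1}([1,c])$ equals $\{x\in\DD : \dT(x)\le \theta^{-1}\log c\}$, and for each $n$ there are only finitely many dyadic points with $\dT(x)=n$: exactly $2^{n-1}$ for $n\ge1$, and one for $n=0$. So the set is finite.

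For~\eqref{i:gamma}, the key preliminary observation is a uniform bound on the jumps of $\dT$. For each of the finitely many affinity intervals $J_i$ and maps $f_j$ we have $f_j(x)=2^{m_{ij}}x+c_{ij}$ with $c_{ij}=a/2^{q_{ij}}$. Hence
\[
|\dT(f_j(x))-\dT(x)|\le K:=\max_{i,j}\bigl(|m_{ij}|+q_{ij}\bigr)+C'
\]
for all $x\in\DD$, with some constant $C'$. Indeed, $\dT(f_j(x))\le \max(\dT(x)-m_{ij},q_{ij})$, and the reverse inequality follows similarly, since $\dT(x)\le\max(\dT(f_j(x))+m_{ij},\,q_{ij}+m_{ij})$. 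In fact, once $\dT(x)>C_0$ we know $\dT(f_j(x))-\dT(x)=-m_{ij}$ exactly, so the increments are bounded by $M:=\max_{i,j}|m_{ij}|$; this is all we need.

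Now take $x$ with $\dT(x)>C_0$, i.e.\ $V(x)>C:=e^{\theta C_0}$, and set $\Delta_j=\dT(f_j(x))-\dT(x)=-m_{ij}$, where $J_i\ni x$. Then
\[
\E_\mgr V(f(x)) = V(x)\sum_j p_j e^{\theta\Delta_j}.
\]
Using $e^{t}\le 1+t+\tfrac{t^2}{2}e^{|t|}$ together with Lemma~\ref{cl:decreasingn} (that is, $\sum_j p_j\Delta_j\le -r$), we get
\[
\sum_j p_j e^{\theta\Delta_j}\le 1-\theta r+\tfrac{\theta^2M^2}{2}e^{\theta M}=:\gamma(\theta).
\]
For all sufficiently small $\theta>0$ this gives $\gamma(\theta)<1$, which is~\eqref{i:gamma}.

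The only point requiring care is uniformity: the bound $M$ on the increments must not depend on $x$, so that a single $\theta$ works for all $x$ outside the core. This is guaranteed by the finiteness of the partition into common affinity intervals and by the exact increment formula from the proof of Lemma~\ref{cl:decreasingn}. Everything else is routine.
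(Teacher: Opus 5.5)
Your proof is correct and follows the paper's argument: outside the core you get $\E_{\mgr} V(f(x)) = V(x)\sum_j p_j e^{-\theta m_{ij}}$, and a Taylor expansion of the exponential shows this factor is below $1$ for small $\theta$, uniformly over the finitely many affinity intervals. The only difference is cosmetic: you use an explicit second-order remainder with $M=\max_{i,j}|m_{ij}|$, whereas the paper writes $1-r_i\theta+o(\theta)$ for each $i$ and then takes the maximum over $i$; your preliminary global jump bound $K$ (with its unspecified $C'$) is not needed.
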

    \begin{proof}
        Condition~\eqref{i:compact} is immediate. To establish condition~\eqref{i:gamma}, in the same way as in Lemma~\ref{cl:decreasingn}, note that for all $x\in J_i$ with $\dT(x)>C_i$ one has
        \[
            \E_{\mgr} V(f(x)) = \E_{\mgr} e^{\theta \dT(f(x))} =
            \left( \sum_j p_j e^{-\theta m_{ij}}\right) \cdot  e^{\theta\dT(x)}.
        \]
        Let $\gamma_i(\theta):=\sum_{j=1}^s p_j e^{-\theta m_{ij}}$. Then as $\theta\to 0$ the Taylor expansion of the exponential gives
        \[
            \gamma_i(\theta) = 1 - \theta \sum_j p_j m_{ij} + o(\theta) = 1 - r_i \theta + o(\theta).
        \]
        In particular, for all sufficiently small $\theta$ one has $\gamma_i(\theta)<1$, and thus the same applies to
        $\gamma(\theta):=\max_i \gamma_i(\theta)$.
        Fixing a sufficiently small $\theta>0$ such that the inequality $\gamma=\gamma(\theta)<1$ holds, we obtain the desired
        \[
            \E_{\mgr} V(f(x)) \le \gamma V(x)
        \]
        for all $x$ with $V(x)>C$, where $C=e^{\theta C_0}$.
    \end{proof}

    The Foster--Lyapunov condition (see, for example,~\cite[Theorem~1.3.1]{MR1287609}) provides the existence of a stationary measure for the dynamics on~$\DD$.
    Now, if $\varphi\in \{f_1,\dots, f_s\}$, then $0$ can be reached with positive probability by the trajectory starting from any point $x\in \DD$. Hence, the Markov chain on the orbit of~$0$ under the semigroup generated by $\{f_1,\dots, f_s\}$ is irreducible. This irreducibility, together with the positive recurrence provided by the Foster--Lyapunov condition, implies the uniqueness of the stationary measure on~$\DD$. This concludes the proof of Theorem~\ref{thm:atomic}.
\end{proof}

We conclude this section with the exponential tail estimate that we will use in the proof of Theorem~\ref{thm:circle}.

Take an arbitrary starting point $x_0\in \DD$ and consider the Markov chain $x_{n+1}=f_{\omega_n}(x_{n})$; let
\[
T:= \min\{ n\ge 0 \mid \dT(x_n) \le C_0\}
\]
be the (random) first hitting
time of the set
\[
X_0:=\{x\in \DD \mid \dT(x)\le C_0\}.
\]

\begin{lemma}\label{l:exptail}
    Under the assumptions of Theorem~\ref{thm:atomic}, there exists $\beta>0$ such that
    for every $x_0\in \DD$ there exists $C=C_{x_0}$ such that for the Markov process $(x_n)$, starting from $x_0$, one has
    \[
        \Prob(T>n) \le C e^{-\beta n}.
    \]
    In particular, the first hitting time $T$ is finite almost surely.
\end{lemma}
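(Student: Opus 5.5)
We use the Lyapunov function $V(x)=e^{\theta\dT(x)}$ of Lemma~\ref{l:applying-FL} to build a supermartingale. Fix $\theta>0$ small enough that $\gamma=\gamma(\theta)<1$, where, as in the proof of Lemma~\ref{l:applying-FL}, $\E_{\mgr} V(f(x))\le \gamma V(x)$ whenever $\dT(x)>C_0$. Let $\mathcal F_n$ be the $\sigma$-algebra generated by $\omega_0,\dots,\omega_{n-1}$, and consider the stopped process
\[
M_n := \gamma^{-(n\wedge T)}\, V(x_{n\wedge T}).
\]
We claim that $M_n$ is a nonnegative supermartingale with respect to $(\mathcal F_n)$.

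To check this, split according to the event $\{T\le n\}$, which is $\mathcal F_n$-measurable.
\begin{itemize}
\item On $\{T\le n\}$ the process is stopped, so $M_{n+1}=M_n$.
\item On $\{T>n\}$ we have $\dT(x_n)>C_0$. Since $\omega_n$ is independent of $\mathcal F_n$, the drift bound gives
\[
\E[M_{n+1}\mid \mathcal F_n]=\gamma^{-(n+1)}\,\E_{\mgr}V(f(x_n))\le \gamma^{-(n+1)}\gamma V(x_n)=M_n .
\]
\end{itemize}
Hence $\E M_n\le M_0=V(x_0)=e^{\theta\dT(x_0)}$.

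On the event $\{T>n\}$ we have $M_n=\gamma^{-n}V(x_n)\ge \gamma^{-n}$, because $V\ge 1$. Markov's inequality then yields
\[
\Prob(T>n)\le \gamma^{n}\,\E M_n\le e^{\theta\dT(x_0)}\gamma^n .
\]
This is the claim with $\beta=-\log\gamma>0$, which depends only on the system, and $C_{x_0}=e^{\theta\dT(x_0)}$. Finiteness of $T$ almost surely follows by letting $n\to\infty$, or by Borel--Cantelli.

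The only point needing care is the drift inequality on $\{T>n\}$. In Lemma~\ref{l:applying-FL} the threshold was phrased as $V(x)>C$ with $C=e^{\theta C_0}$, which is the same condition as $\dT(x)>C_0$. The underlying computation holds for $x\in J_i$ with $\dT(x)>C_i$, and $C_0=\max_i C_i$. So the inequality $\E_{\mgr} V(f(x))\le\gamma V(x)$ holds exactly on the complement of $X_0$, which is where we use it. The remaining steps are routine.
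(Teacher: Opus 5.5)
Your proof is correct and follows the paper's argument: the same Lyapunov function $V=e^{\theta\dT}$, the drift inequality off $X_0$, the supermartingale bound, and Markov's inequality on $\{T>n\}$ give $\Prob(T>n)\le V(x_0)\gamma^n$. The only cosmetic difference is that the paper uses the killed process $\gamma^{-n}V(x_n)\Ind_{T>n}$ instead of your stopped process $\gamma^{-(n\wedge T)}V(x_{n\wedge T})$, and this does not change the estimate.
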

\begin{proof}
    Fix $\theta>0$ and corresponding $\gamma<1$, provided by Lemma~\ref{l:applying-FL}.
    Consider the family of random variables
    \[
        \xi_n:= \gamma^{-n} V(x_n) \cdot \Ind_{T(\omega)>n}.
    \]
    Then these variables form a nonnegative supermartingale (with respect to the natural family of $\sigma$-algebras $\mF_n$, generated by $\omega_0,\dots,\omega_{n-1}$): either $T(\omega)\le n$, and then $\xi_n(\omega)=\xi_{n+1}(\omega)=0$, or
    \[
        \E(\xi_{n+1}\mid \mF_n) \le \gamma^{-(n+1)} \E (V(x_{n+1})\mid \mF_n) \le \gamma^{-(n+1)} \cdot \gamma V(x_n) \le \xi_n.
    \]
    In particular, the sequence of expectations $\E \xi_n$ is non-increasing. As $V(x)\ge 1$ for all $x\in\DD$, one has $\xi_n(\omega) \ge \gamma^{-n}$ whenever $T(\omega)>n$, and the desired exponential tail estimate follows from Markov's inequality:
    \[
        \Prob(T> n) \le \frac{\E \xi_n}{\gamma^{-n}} \le \gamma^n\cdot \E \xi_0 = V(x_0)\cdot \gamma^n.
    \]
    This concludes the proof.
\end{proof}

\section{Smooth maps: concluding the construction}\label{sec:conjugation}
\subsection{Smoothing the example: Ghys--Sergiescu realization}\label{ss:G-S}
The next step is to use the smooth realization of the Thompson group, constructed by \'E.~Ghys and V. Sergiescu \cite{ghys_sur_1987} (also see the exposition in~\cite[Chapter~1]{MR2809110}). Let us recall this construction.

As was already mentioned in Section~\ref{s:prelim-Thompson}, the elements of the Thompson group can be locally written as a composition of the doubling map $\varphi$ and its inverse branches; indeed, one has
\[
2^m x+ \frac{a}{2^q} = 2^{-q}\cdot (2^{m+q} x + a) = \varphi^{-q} (\varphi^{m+q}(x)) \mod 1
\]
for an appropriate choice of the branch of~$\varphi^{-q}$.

Now, let $\psi:\Sc\to\Sc$ be such that
\begin{itemize}
    \item the map $\psi$ is $C^r$-tangent to the identity at its fixed point~$0$,
    \item the map $\psi$ is a $C^r$ locally invertible map of degree~$2$,
\end{itemize}
where $r\in \N\cup \{\infty\}$. One can consider the construction of the Thompson group with $\varphi$ replaced by $\psi$, denoted by~$\TT_\psi$. Namely, the maps are now piecewise compositions of powers of $\psi$ and of appropriately chosen branches of its inverse, and the endpoints of intervals and their images belong
to the set
\[
\DD_{\psi}:=\bigcup_n \psi^{-n}(\{0\}).
\]

A theorem by Ghys and Sergiescu \cite{ghys_sur_1987} states that the Thompson group $\TT_\psi$ is formed by $C^r$-diffeomorphisms, thus providing a $C^r$-smooth action of the Thompson group $\TT$ on the circle. Indeed, take any point $b\in \DD_{\psi}$, and assume that some $f\in \TT_{\psi}$ is given to the left and to the right of it by two different compositions $\psi^{-l}\circ \psi^k$ and $\psi^{-l'}\circ \psi^{k'}$, respectively (with some choices of branches for the preimages). To each of these compositions corresponds a path in $\DD_{\psi}$, where each point is joined to its $\psi$-image (see Figure~\ref{fig:tree}, right). Both these paths start at $b$ and arrive at $f(b)$.

Now, this graph is a tree with one loop attached at the root~$0$. Hence, any two paths from $b$ to $f(b)$ differ by going around this loop (forward or backward) a given number of times, which corresponds to applying $\psi$ at the point~$0$. As $\psi$ is $C^r$-tangent to the identity at~$0$, the compositions corresponding to two such paths are $C^r$-tangent to each other. The compositions defining the map $f\in \TT_{\psi}$ on all the intervals are $C^r$-locally invertible (as the map $\psi$ is) and are tangent to each other at the endpoints of these intervals, which implies that the resulting map $f$ is a $C^r$-diffeomorphism.

We note that actually the same argument applies to the Thompson semigroup. Namely, given a map $\psi$ as before, for every $f\in \Ts$ one can associate to it the map formed by the corresponding branches of compositions $\psi^{-l}\circ \psi^k$ on the intervals whose endpoints are the corresponding points of~$\DD_{\psi}$. This produces a semigroup $\Ts_{\psi}$ of $C^r$-smooth locally invertible maps (and a $C^r$-smooth action of the Thompson semigroup on the circle).
\begin{proposition}\label{p:Ts-smooth}
    Let $\psi:\Sc\to\Sc$ be a degree~$2$ map of class $C^r$ such that
    \begin{itemize}
        \item $\psi$ is $C^r$-tangent to the identity at its fixed point~$0$,
        \item the map $\psi$ is a $C^r$ locally invertible map of degree~$2$,
    \end{itemize}
    for $r\in \N\cup \{\infty\}$. Then
    the semigroup $\Ts_{\psi}$ consists of locally invertible maps of class~$C^r$.
\end{proposition}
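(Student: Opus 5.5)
To prove Proposition~\ref{p:Ts-smooth}, we adapt the Ghys--Sergiescu argument recalled above, with two additions: we set up the conjugacy-type correspondence that defines $\Ts_\psi$, and we check local invertibility rather than global invertibility. First, since $\psi$ is a degree~$2$ locally invertible map fixing $0$ with $\psi'(0)=1$, it is a topological expansion away from the parabolic point. Each point other than $0$ eventually moves away under iteration, and preimages of small intervals shrink. We would use this to produce an orientation-preserving homeomorphism $h:\Sc\to\Sc$ with $h\circ\varphi=\psi\circ h$ and $h(\DD)=\DD_\psi$. One route is to match the dyadic tree of $\varphi$-preimages of $0$ with the tree of $\psi$-preimages, preserving cyclic order, and extend by density. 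Density of $\DD_\psi$ follows because the lengths of the intervals of $\psi^{-n}$-partitions tend to zero. Then, for $f\in\Ts$, we define $f_\psi:=h\circ f\circ h^{-1}$. On each affinity interval $J$ of $f$, with dyadic endpoints, $f$ equals a branch of $\varphi^{-l}\circ\varphi^k$. Hence $f_\psi$ equals the corresponding branch of $\psi^{-l}\circ\psi^k$ on $h(J)$, whose endpoints lie in $\DD_\psi$. This identifies $\Ts_\psi$ with the semigroup described in the text and shows that it is closed under composition.

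Second, we establish smoothness and local invertibility on the interior of each piece. On $\intrr h(J)$ the map $f_\psi$ is a composition of $\psi^k$ and a local inverse branch of $\psi^l$. Both are $C^r$ with nonvanishing derivative, because $\psi$ is a $C^r$ local diffeomorphism, so inverse branches are $C^r$ by the inverse function theorem. Each branch formula also extends as a $C^r$ local diffeomorphism to a neighbourhood of the closed interval $h(J)$.

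The main step, and the expected obstacle, is gluing at an endpoint $b\in\DD_\psi$. Let the left and right formulas be $g_-=\psi^{-l}\circ\psi^k$ and $g_+=\psi^{-l'}\circ\psi^{k'}$, both defined near $b$, with $g_\pm(b)=f_\psi(b)$. Following the text, each composition corresponds to a path in the tree-with-loop from $b$ to $f_\psi(b)$. Because the graph is a tree plus a single loop at $0$, the two paths differ only by traversals of the loop. Concretely, after reducing backtracking, which cancels locally since $\psi^{-1}\circ\psi=\id$ on the appropriate branch near the relevant point, we get $g_+=A\circ\psi^{\pm n}\circ B$ near $b$. Here $B$ is a local composition sending $b$ to $0$, and $A$ is a local branch composition sending $0$ to $f_\psi(b)$. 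Likewise $g_-=A\circ B$. Since $\psi$, and hence every $\psi^{\pm n}$ near $0$, is $C^r$-tangent to the identity at $0$, the $r$-jets of $g_+$ and $g_-$ agree at $b$. The delicate point is justifying that reduction: we must check that the tree structure forces the paths to agree up to loop insertions, and that the branches of $\psi^{-1}$ used are the ones continuing the same local branch, so that the cancellations are genuine equalities of germs. We would do this by induction on path length, cancelling an adjacent $\psi^{-1}\psi$ or $\psi\psi^{-1}$ pair whenever it is not at the root.

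With the jets matching at every breakpoint, we conclude. A function that is $C^r$ on each closed piece and has matching one-sided derivatives up to order $r$ at the finitely many breakpoints is $C^r$; for $r=\infty$ this holds for all orders. The derivative is nonzero on each piece and at the breakpoints. Hence every $f_\psi\in\Ts_\psi$ is a $C^r$ local diffeomorphism of the circle, that is, a locally invertible map of class~$C^r$.
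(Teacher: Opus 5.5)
\textbf{Gap: your first step assumes expansion, which the proposition does not give you.} Your gluing argument in Steps~2--3 is correct and is the paper's (Ghys--Sergiescu) argument, in more detail than the paper gives. After cancelling backtracking, the two formulas at a breakpoint $b\in\DD_\psi$ become $A\circ\psi^{n_\pm}\circ B$. Their $r$-jets agree because $\psi^{\pm n}$ is $C^r$-tangent to the identity at $0$.

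The problem is your first step. The proposition assumes only that $\psi$ is a $C^r$ degree-$2$ local diffeomorphism, $C^r$-tangent to the identity at $0$. Nothing forces $\psi$ to expand away from $0$. So the following claims of yours are false in this generality:
\begin{itemize}
\item every point other than $0$ moves away under iteration;
\item the $\psi^{-n}$-partitions have mesh tending to zero;
\item $\DD_\psi$ is dense;
\item there is a conjugacy $h$ with $h\circ\varphi=\psi\circ h$.
\end{itemize}
For a counterexample, let $\psi$ have an attracting fixed point $p\neq 0$. Take the lift $x+u(x)$, where $u(x+1)=u(x)+1$, $u$ is flat at $0$, $u'>-1$, and $u$ crosses $0$ downward at $p$ with slope in $(-1,0)$. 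A small forward-invariant neighbourhood of $p$ not containing $0$ is then disjoint from $\DD_\psi$. In that case $\psi$ is only semi-conjugate to $\varphi$, as the paper itself remarks. The hypothesis $\psi'(x)>1$ for $x\neq 0$ enters only later, in Corollary~\ref{cor:smooth}. So here you cannot define $\Ts_\psi$ as $h\,\Ts\,h^{-1}$, nor get continuity and closure under composition from such an $h$.

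\textbf{How to repair it.} The conjugacy is not needed; replace it by the combinatorial correspondence.
\begin{itemize}
\item For any orientation-preserving degree-$2$ covering fixing $0$, each arc cut out by $\psi^{-n}(0)$ is mapped homeomorphically by $\psi$ onto an arc cut out by $\psi^{-(n-1)}(0)$, exactly as for $\varphi$.
\item Hence there is a cyclic-order-preserving bijection $\DD\to\DD_\psi$ intertwining $\varphi$ and $\psi$.
\item Define $f_\psi$ directly by the corresponding branches of $\psi^{-l}\circ\psi^k$ on the corresponding closed arcs. This is the paper's definition.
\item Check that adjacent formulas send each breakpoint to the same point, using the intertwining.
\end{itemize}
Your Steps~2--3 then apply verbatim. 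This is what the paper means by saying the Ghys--Sergiescu argument does not use that $f$ is a homeomorphism. Ghys and Sergiescu themselves work with a semi-conjugacy.
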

\begin{proof}
    In their proof in~\cite{ghys_sur_1987}, Ghys and Sergiescu do not use that $f$ is a homeomorphism; it suffices to repeat the same arguments, which imply that adjacent branches are $C^r$-tangent at the common endpoints of the intervals.
\end{proof}

Now, if additionally the expansion away from 0 condition holds, $\psi'(x)>1$ at every point $x\neq 0$, then~$\psi$ is topologically conjugate to the doubling map $\varphi$: there exists $h\in \Homeo_+(\Sc)$, such that
\begin{equation}\label{eq:psi-phi-h}
 \psi = h^{-1} \circ  \varphi \circ h.
\end{equation}
This conjugacy also conjugates all the compositions of $\varphi$ and branches of its inverses to the corresponding compositions of $\psi$ and its inverses. Hence, the full action of the Thompson group becomes conjugate:
one has
\[
\TT_{\psi}=\{h^{-1}\circ f\circ h \mid f\in \TT\}.
\]
(For a general~$\psi$, the actions are semi-conjugate instead of being conjugate, and this was used by Ghys and Sergiescu to establish the rationality of the rotation numbers of all the maps from~$\TT$.)

Again, the same argument applies to the Thompson semigroup $\Ts$: assuming $\psi'(x)>1$ at every point $x\neq 0$, one has
\begin{equation}\label{eq:Ts-conjugate}
\Ts_{\psi}=\{h^{-1}\circ f\circ h \mid f\in \Ts\}.
\end{equation}

Proposition~\ref{p:Ts-smooth} together with the description~\eqref{eq:Ts-conjugate} imply the following statement.

\begin{corollary}\label{cor:smooth}
    Given a random dynamical system $(f_{1,(0)},\dots,f_{s,(0)};p_1,\dots,p_s)$, formed by maps $f_{i,(0)}\in\TT^+$, that satisfies the assumptions of Theorem~\ref{thm:atomic}, and let $\psi:\Sc\to\Sc$ be a degree~$2$ map of class $C^r$ such that
    \begin{itemize}
        \item $\psi$ is $C^r$-tangent to the identity at its fixed point~$0$,
        \item $\psi'(x)>1$ at every point $x\neq 0$,
    \end{itemize}
    for $r\in \N\cup \{\infty\}$.
    Then the map $h$, given by \eqref{eq:psi-phi-h}, conjugates this system to a random dynamical system
    \begin{equation}\label{eq:conjugates}
        (f_{1},\dots,f_{s};p_1,\dots,p_s),    \quad f_i=h^{-1}\circ f_{i,(0)}\circ h \in\TT^+_{\psi}, \quad i=1,\dots,s
    \end{equation}
    formed by locally invertible maps~$f_i$ of class $C^r$ and possessing an atomic stationary measure~$(h^{-1})_* \msp$, supported on~$\DD_{\psi}$.
\end{corollary}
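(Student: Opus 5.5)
The corollary is essentially a transport statement, and I will prove it by assembling three facts. These are Theorem~\ref{thm:atomic} for the piecewise-affine system, the conjugacy \eqref{eq:psi-phi-h}, and Proposition~\ref{p:Ts-smooth} for smoothness.

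First, I will justify the existence of $h$. Since $\psi$ has degree~$2$ and $\psi'(x)>1$ for $x\neq 0$, $\psi$ is an expansive degree-two covering map. The standard argument (as in Ghys--Sergiescu) gives $h\in\Homeo_+(\Sc)$ with $h(0)=0$ and $\psi=h^{-1}\circ\varphi\circ h$. Concretely, one sends the nested partitions of the circle by $\psi^{-n}(\{0\})$ to the dyadic partitions. Injectivity of the limit map follows because the mesh of the $\psi$-partitions tends to zero. I expect this step to be the main obstacle, since near the parabolic point one must check that preimage intervals still shrink. I will handle it by noting that the lengths of the preimage intervals adjacent to~$0$ decrease monotonically to a limit. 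A positive limit would be an interval mapped into itself with non-contracting inverse branch, and this contradicts $\psi'>1$ off $0$. In particular $h(\DD_\psi)=\DD$.

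Next, I will identify the conjugated maps. For $f_{i,(0)}\in\Ts$, on each affinity interval the map is a composition $\varphi^{-q}\circ\varphi^{m+q}$ with chosen branches. Conjugating by $h$ turns this into $\psi^{-q}\circ\psi^{m+q}$ with the corresponding branches, on the interval $h^{-1}(J)$, whose endpoints lie in $\DD_\psi$. Hence $f_i=h^{-1}\circ f_{i,(0)}\circ h$ is exactly the element of $\Ts_\psi$ associated with $f_{i,(0)}$, which is \eqref{eq:Ts-conjugate}. Proposition~\ref{p:Ts-smooth} then says each $f_i$ is a $C^r$ locally invertible map. Only its first hypothesis is needed, and local invertibility of $\psi$ follows from $\psi'>0$: at $0$ we have $\psi'(0)=1$, and elsewhere $\psi'>1$.

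Finally, I will transport the measure. Theorem~\ref{thm:atomic} gives an atomic stationary $\msp$ on $\DD$ with $\msp=\sum p_i (f_{i,(0)})_*\msp$. Applying $(h^{-1})_*$ and using $h^{-1}\circ f_{i,(0)}=f_i\circ h^{-1}$ yields
\[(h^{-1})_*\msp=\sum_i p_i (f_i)_*(h^{-1})_*\msp.\]
So $(h^{-1})_*\msp$ is stationary for the system \eqref{eq:conjugates}. It is atomic, since $h^{-1}$ is a bijection, and it is supported on $h^{-1}(\DD)=\DD_\psi$.
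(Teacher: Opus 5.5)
Your proof is correct and follows the paper's route: the paper obtains the corollary directly from Proposition~\ref{p:Ts-smooth} together with the conjugacy description \eqref{eq:Ts-conjugate}, with the stationary measure of Theorem~\ref{thm:atomic} transported by $(h^{-1})_*$ exactly as you do. The only difference is that you try to justify the existence of $h$, which the paper takes as known (the corollary refers to the $h$ of \eqref{eq:psi-phi-h}); as sketched, your mesh argument only controls the partition intervals adjacent to $0$, and the remaining intervals need an extra step, for instance a compactness argument using that $\int_I \psi' > |I|$ for every nondegenerate interval $I$.
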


\begin{remark}
    The maps shown in Figure~\ref{fig:GS-example} are obtained by such a conjugacy from the piecewise-affine maps given in Example~\ref{ex:PL} (see Figure~\ref{fig:PL-example}).
\end{remark}

After conjugation of Example~\ref{ex:PL} we have the following example:

\begin{example}\label{ex:conju}
Let $\psi$ be a $C^{\infty}$ degree~$2$ map of the circle with a fixed point $0$, and assume
    \begin{itemize}
        \item $\psi$ is $C^\infty$-tangent to the identity at its fixed point~$0$,
        \item $\psi'(x)>1$ at every point $x\neq 0$.
    \end{itemize}
    Let $h$ be the conjugacy between $\psi$ and $\varphi$, given by \eqref{eq:psi-phi-h}, and let $c=h^{-1}(\frac{1}{2})$ be a $\psi$-preimage of~$0$. Denote $\psi_0:=\psi|_{[0,c]}$, $\psi_1:=\psi|_{[c,1]}$. Then, this conjugacy, applied to the maps from Example~\ref{ex:PL}, provides the following maps:
\begin{align*}
    f_1(x) &= \psi(x),\\
    f_2(x) &=
    \begin{cases}
    \psi_0^{-1}\circ\psi_1^{-1}\circ \psi, &\text{ for } x \in [0,h^{-1}(\frac{1}{2})), \\
    \psi_1^{-1}\circ \psi \circ \psi, &\text{ for } x \in [h^{-1}(\frac{1}{2}),h^{-1}(\frac{3}{4})), \\
    \psi_0^{-1}\circ\psi_0^{-1}\circ \psi\circ \psi, &\text{ for } x \in [h^{-1}(\frac{3}{4}),1).
    \end{cases}
\end{align*}
The graphs of these maps (for a particular choice of the map $\psi$) are shown in Fig.~\ref{fig:GS-example}.

Now, consider a random dynamical system $( f_1,  f_2;p_1,p_2)$, where $p_1>\frac{1}{2}$; for instance, one can take
\[
p_1=\frac{3}{4},\quad p_2=1-p_1 = \frac{1}{4}.
\]
Then, as we will see in Section~\ref{ss:physical}, this random dynamical system admits a physical atomic stationary measure, though it has no common invariant measure. Showing this will complete the proof of Theorem~\ref{thm:circle}.
\end{example}

\subsection{Physical measure}\label{ss:physical}

Recall that a measure $\msp$ is called \emph{physical} for a smooth random dynamical system $(f_1,\dots,f_s;p_1,\dots,p_s)$ on a compact manifold $M$, if for Lebesgue-a.e.\ initial point $x_0\in M$ and a.e.\ sequence of its random iterations
\[
x_{j+1}=f_{\omega_j}(x_{j}), \quad j=0,1,2,3,\dots,
\]
(where $\omega_j$ are i.i.d., taking each value $i$ with the probability~$p_i$), one has
\[
\frac{1}{n} \sum_{j=0}^{n-1} \delta_{x_j} \wto  \msp, \quad n\to \infty.
\]

In this section, we will show that for the smooth examples, constructed in Corollary~\ref{cor:smooth}, the atomic stationary measure, supported on $\DD_{\psi}$, is a physical one. Actually, the same holds for any random dynamical system obtained from the one satisfying the assumptions of Theorem~\ref{thm:atomic} by the Ghys--Sergiescu smooth realization procedure. Namely, given an initial point $x_0\in\Sc$ and its random trajectory $x_{n+1}=f_{\omega_n}(x_{n})$, consider the sequence of time-averaged (random) measures
\begin{equation}\label{eq:nu-bar}
    \bar{\msp}_n := \frac{1}{n} \sum_{j=0}^{n-1} \delta_{x_j}.
\end{equation}
Then, the following statement holds.

\begin{theorem}\label{thm:physical}
    Let $(f_{1,(0)},\dots,f_{s,(0)};p_1,\dots,p_s)$ be a system that satisfies the assumptions of Theorem~\ref{thm:atomic}. Let $\psi:\Sc\to\Sc$ be a degree~$2$ map of class $C^r$ such that
    \begin{itemize}
        \item $\psi$ is $C^r$-tangent to the identity at its fixed point~$0$,
        \item $\psi'(x)>1$ at every point $x\neq 0$,
    \end{itemize}
    for $r\in \N\cup \{\infty\}$, $r\geq 2$.
    Let $h$ be the conjugacy between $\psi$ and~$\varphi$, given by~\eqref{eq:psi-phi-h}. Then, for the conjugated smooth system~\eqref{eq:conjugates}, for Lebesgue-a.e.\ initial point~$x_0$, almost surely all accumulation points of the sequence $\bar{\msp}_n$, given by~\eqref{eq:nu-bar}, are measures supported on~$\DD_{\psi}$.

    Moreover, if the doubling map $\varphi$ is among the $f_{i,(0)}$ (and thus $\psi$ is among the $f_i$), then the sequence $\bar{\msp}_n$ almost surely converges to the atomic stationary measure~$\msp$, supported on~$\DD_{\psi}$.
\end{theorem}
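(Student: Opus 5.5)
The plan is to work simultaneously in the smooth coordinates (where $\psi$ is parabolic at $0$) and in the $\varphi$-coordinates $y=h(x)$ (where everything is piecewise affine and the combinatorics of Theorem~\ref{thm:atomic} is available). Since $\DD_\psi$ is dense, the target statement is the following. For every $\eps>0$ there is a finite set $K\subset\DD_\psi$ such that, for every $\delta>0$, for Lebesgue-a.e.\ $x_0$ and a.s.,
\[
\limsup_{n\to\infty} \bar\msp_n\bigl(\Sc\setminus U_\delta(K)\bigr)\le \eps .
\]
This forces every accumulation point of $\bar\msp_n$ to give mass at least $1-\eps$ to $K$, and letting $\eps\to0$ shows it is supported on $\DD_\psi$. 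I will take $K=h^{-1}(X_0')$, where $X_0'=\{b\in\DD \mid \dT(b)\le C'\}$ with $C'$ large.

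\textbf{Shadowing the chain on $\DD$.} First I will show that a point $x$ very close to some $b\in\DD_\psi$ is moved by each $f_i$ exactly as $b$ is, i.e.\ by the same composition of $\psi$ and inverse branches, as long as it stays on the same side of the breakpoints. In $\varphi$-coordinates the displacement $y-h(b)$ gets multiplied by $2^{m_{ij}}$ at each step. Hence, along a trajectory started near $b$, the pair (point of $\DD$, scale) evolves as the Markov chain of Theorem~\ref{thm:atomic} together with a multiplicative random walk $\log_2|y_n-b_n|$ with positive drift $r$. By Lemma~\ref{l:exptail}, the shadowed chain $b_n$ spends all but an exponentially small tail of its time in $X_0$. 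So I must control (i) how long a shadowing stretch lasts, and (ii) what happens between stretches.

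\textbf{The parabolic point makes stretches long.} In $\varphi$-coordinates a scale $2^{-k}$ escapes in roughly $k/r$ steps, which is far too short. The gain comes from the smooth coordinates. Near $0$, $\psi(x)-x=o(|x|^N)$ for every $N$, and every visit of $b_n$ to $0$ through $f_i=\psi$ contributes a step of the slow parabolic dynamics. I will therefore measure time in $x$-coordinates directly. The stretch spends, near $0$, about as many steps as $\psi$ needs to escape from the current distance, which is at least of order $|x|^{-N}$. Meanwhile, the inverse branches and the other points of $X_0$ only change $|x-b|$ by bounded factors. I would quantify this with a Margulis-type function $W(x)=$ (escape time of $\psi$ from $x$ near $0$), transported to neighbourhoods of $K$ by the finitely many branches, and show that its expected change per step is $\approx -1$ outside a tiny set.

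\textbf{Entry into the deep region.} Next I will show that from any point outside $U_\delta(K)$, with probability bounded below, the trajectory enters $U_{\delta'}(0)$ within a bounded number of steps, where $\delta'\ll\delta$ is arbitrary. For Lebesgue-a.e.\ start this entry depth is distributed like $\psi$-preimage landings, with Lebesgue density bounded below near $0$ after using distortion control for $\psi$ away from $0$ ($r\ge2$). Thus the excursions outside $U_\delta(K)$ have lengths with an exponential tail (bounded time plus geometric number of trials). The stretches inside have lengths whose distribution is not integrable, since the invariant density of $\psi$ near $0$ is non-integrable (Thaler~\cite{Thaler1980}). A renewal/law-of-large-numbers argument over the i.i.d.-dominated cycle structure (in-stretch, excursion) then gives frequency of excursions $\to0$. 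The step I expect to be the main obstacle is making this renewal argument rigorous, i.e.\ getting genuinely independent lower bounds on stretch lengths despite the dependence on the entry point. I would first try conditioning on the entry depth and using the strong Markov property with a stochastic domination by i.i.d.\ heavy-tailed variables.

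\textbf{Identification of the limit.} Finally, for the convergence claim when $\psi$ is among the $f_i$: on stretches, the $K$-component of the trajectory is exactly the chain on $\DD$ from Theorem~\ref{thm:atomic}. That chain is irreducible and positive recurrent, with unique stationary measure $\msp$. The ergodic theorem for the chain, applied along the concatenated stretches (whose total proportion tends to $1$), shows that any limit of $\bar\msp_n$ restricted to $K$ coincides with $\msp|_K$ up to $O(\eps)$. Letting $\eps\to0$ gives $\bar\msp_n\wto\msp$.
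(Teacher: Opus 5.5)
Your reduction is exactly the paper's~\eqref{eq:eps-1-2}. Your shadowing observation is also correct: near $b\in\DD_\psi$ the trajectory follows the chain on $\DD_\psi$, while its $\varphi$-scale performs a random walk with drift $r$. The gap is in ``Entry into the deep region'' and the renewal argument built on it, and it is not merely technical.

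Every $f_i$ is, on each $J_i$, a composition $\psi^{-l_{ij}}\circ\psi^{k_{ij}}$. So for fixed $x_0$ the whole random trajectory stays in the countable grand orbit $\mO_{x_0}$. The drift makes it climb the deterministic forward orbit $\psi^k(x_0)$ at linear speed, and its depth below that orbit has uniformly exponential tails. The randomness $\omega$ only decides how fast this fixed sequence is read. Consequently:
\begin{itemize}
\item There is no fresh Lebesgue randomness per cycle: given the past, the law of the next entry point is atomic.
\item The lengths of your stretches are dictated by how long $\psi^k(x_0)$ lingers near $0$, i.e.\ by $x_0$ alone.
\item No conditional domination by i.i.d.\ heavy-tailed variables can hold. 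If the upcoming portion of $\psi^k(x_0)$ stays away from $0$, the next stretch is short with high probability.
\end{itemize}
The entry claim is also not implied by the hypotheses. For $(\varphi,\varphi^2;p_1,p_2)$, or for $\varphi$ alone (where the statement is just Theorem~\ref{t:parabolic}), a point near $h^{-1}(\{\frac13,\frac23\})$ needs arbitrarily many steps to approach $0$. Where such entries do exist, they do not help. In Example~\ref{ex:PL}, for instance, $f_2\circ f_1\circ f_2\circ f_2$ acts as $y\mapsto y/2$ on $[0,\frac12)$. Such entries come from descending $M$ levels in the tree, which has probability exponentially small in $M$, and the walk then climbs back in about $M/r$ steps. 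They therefore only produce stretches with exponential tails.

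Relatedly, escape times are the same in $\psi$- and $\varphi$-coordinates. The ``gain'' does not come from the smooth coordinates. It comes from $h$ sending Lebesgue measure to a measure carried by points whose $\varphi$-orbits have long runs near $0$, which again is a property of $x_0$.

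The missing idea is to use this confinement rather than fight it, spending the Lebesgue-genericity in a single application of Theorem~\ref{t:parabolic} to the orbit of $x_0$. The paper proceeds as follows.
\begin{itemize}
\item View $\mO_{x_0}$ as a tree with spine $\psi^k(x_0)$, and track the level $k_n$ and the depth below the spine.
\item The supermartingale of Lemma~\ref{l:exptail}, now built from the height function of this tree, gives uniform exponential tails for the time $T_k$ spent on level $k$. The depth is at most $L$ on arrival at each level.
\item Call $k$ good if $\psi^k(x_0)\in U_{\eps_2}(0)$. Since inverse branches of $\psi$ do not expand, at a good level every point of depth at most $N$ lies in $U_{\eps_2}(\DD_{\psi;N})$, and reaching depth $N$ takes about $N/L$ steps.
\item Thaler's theorem says the bad levels have density $0$.
\item Martingale Bernstein bounds plus Borel--Cantelli for $\sum_k T_k\Ind_{\text{$k$ bad}}$ and $\sum_k \max(T_k-N/L,0)$, together with $k_n\le Kn$, transfer this to the time averages.
\end{itemize}
For the convergence claim you also do not need an ergodic theorem along concatenated stretches. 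Almost surely every accumulation point of $\bar\msp_n$ is stationary, by Breiman's law of large numbers for Feller chains. The stationary measure carried by $\DD_\psi$ is unique once $\psi$ is among the $f_i$, which identifies the limit.
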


\begin{proof}

The proof of Theorem~\ref{thm:physical} proceeds in two steps. We start by considering a \emph{deterministic} dynamical system on the circle, that is given by the iterations of the map~$\psi:\Sc\to\Sc$ only. This system is non-strictly expanding with the only parabolic fixed point at the point~$0$; such systems have been studied for many decades. A classical result in this setting (established under different assumptions by many authors) is that for a Lebesgue-generic initial point, the distribution of its trajectory in the first $n$ iterations converges to the Dirac mass at the parabolic point. In other words, it means that $\delta_0$ is the physical measure of this system; see
Thaler~\cite{Thaler1980} (Theorem~1 and Corollary~2 therein),
Inoue~\cite[Theorem~1]{MR1452184}, and Holland~\cite{MR2122916}. For our argument we use the precise formulation below, adapted from Thaler's 1980 result.

\begin{theorem}[Thaler \cite{Thaler1980}]\label{t:parabolic}
    Let $\psi:\Sc\to\Sc$ be a degree~$2$ map of class $C^r$ such that
    \begin{itemize}
        \item $\psi$ is $C^r$-tangent to the identity at its fixed point~$0$,
        \item $\psi'(x)>1$ at every point $x\neq 0$,
    \end{itemize}
    for $r\in \N\cup \{\infty\}$, $r\geq 2$.
    Then for Lebesgue-a.e.\ $x_0\in \Sc$ one has
    \begin{equation}\label{eq:conv-0}
        \frac{1}{n} \sum_{j=0}^{n-1} \delta_{\psi^j(x_0)} \wto \delta_0.
    \end{equation}
\end{theorem}
\noindent For the reader's convenience, we recall a sketch of its proof at the end of this section.

The second step of the proof is Proposition~\ref{prop:acc-pts} below: we show that whenever the initial point~$x_0$ satisfies the conclusion~\eqref{eq:conv-0} of Theorem~\ref{t:parabolic},
all the accumulation points of the sequence of distributions $\bar\msp_n$ of its random trajectory are almost surely supported
on~$\DD_\psi$.

Combining these two steps proves the first claim of
Theorem~\ref{thm:physical}; the second claim (a.s.\ convergence
to~$\msp$ when $\varphi$ is among the $f_{i,(0)}$) then follows from
uniqueness of the stationary measure on~$\DD_\psi$, since almost surely any weak-*
accumulation point of $\bar\msp_n$ is a stationary measure~\cite[Theorem 4.20]{MR4559704}.

\begin{proposition}\label{prop:acc-pts}
    Let $\psi:\Sc\to\Sc$ be a degree~$2$ map of class $C^r$ such that
    \begin{itemize}
        \item $\psi$ is $C^r$-tangent to the identity at its fixed point~$0$,
        \item $\psi'(x)>1$ at every point $x\neq 0$,
    \end{itemize}
    for $r\in \N\cup \{\infty\}$, $r\geq 2$. Let $x_0\in \Sc$ be such that~\eqref{eq:conv-0} holds. Then almost surely all the accumulation points of the sequence $\bar{\msp}_n$, given by~\eqref{eq:nu-bar}, are measures supported on~$\DD_{\psi}$.
\end{proposition}
\begin{proof}
    Let $\dT_{\psi}:\DD_{\psi}\to \Z_+$ be the distance to the root function in the tree on Fig.~\ref{fig:tree}, right:
    \[
        \dT_{\psi}(x):=\dT(h(x))=\min \{n\ge 0 \mid \psi^n(x)=0\},
    \]
    and denote by $\DD_{\psi;N}$ the set of points of $\DD_{\psi}$ for which this distance does not exceed a given~$N$:
    \[
    \DD_{\psi;N}:=\{x\in \DD_{\psi}, \, \dT_{\psi}(x)\le N\}.
    \]
    It suffices to show that
    \begin{equation}\label{eq:eps-1-2}
    \forall \eps_1>0 \quad  \exists N: \quad \forall \eps_2>0 \quad \liminf_{n\to\infty} \bar{\msp}_n\left(
    U_{\eps_2}(\DD_{\psi;N})
    \right) \ge 1-\eps_1 \quad \text{a.s.}
    \end{equation}
    Indeed, establishing~\eqref{eq:eps-1-2} would imply that almost surely, for any accumulation point $\msp$ of measures $\bar{\msp}_n$, one has
    \[
     \forall \eps_1>0  \quad \exists N: \quad  \msp(\DD_{\psi;N}) \ge 1-\eps_1,
    \]
     which implies the desired
     \[
     \msp(\DD_{\psi})= \lim_{N\to \infty} \msp(\DD_{\psi;N}) =1.
     \]

     Now, given an initial point $x_0$, let us consider its full $\psi$-orbit
     \[
     \mO_{x_0}=\bigcup_{k,l} \, \{x\mid \psi^l(x)=\psi^k(x_0)\}.
     \]
     Assume first that $x_0$ is not a $\psi$-preperiodic point. Then the full $\psi$-orbit of $x_0$ is an infinite binary tree: every point $x$ is joined to its $\psi$-image and its two $\psi$-preimages. Consider also the forward iterations $x_{(k)}:=\psi^k(x_0)$; we refer to the path
     $x_{0}, \psi(x_0),\psi^2(x_0),\dots$, formed by them, as the \emph{spine} of this tree; see Figure~\ref{fig:spine}.

    \begin{figure}[h!]
        \centering
        \includegraphics[width=0.8\linewidth]{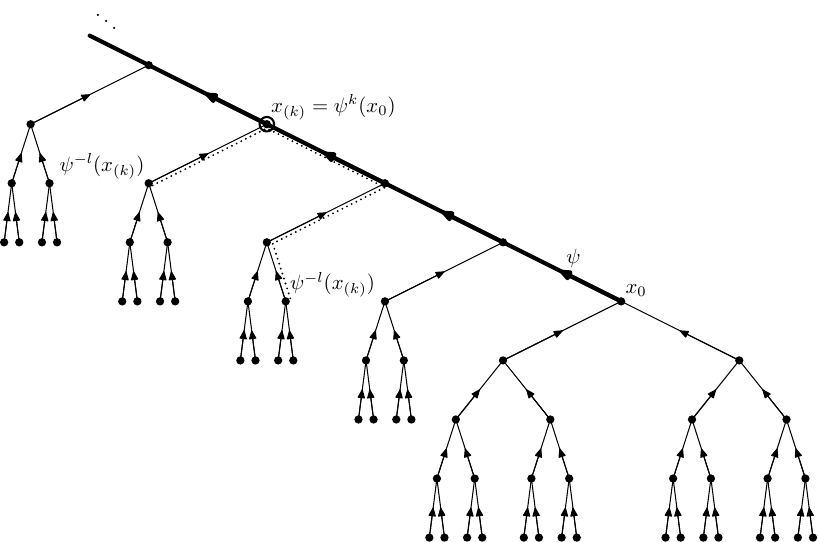}
        \caption{Orbit $\mO_{x_0}$ of a generic point $x_0$ as a binary tree. In bold is shown the \emph{spine}, that is, $\psi$-forward orbit of~$x_0$. The part of the tree below a given point~$x_{(k)}$ of the spine consists of all its $\psi$-preimages (two such preimages are shown).}
        \label{fig:spine}
    \end{figure}

     Now, for a point $y\in \mO_{x_0}$, let $k(y), l(y)$ be the smallest nonnegative integers $(k,l)$ such that $y$ can be represented as
     \[
        y=\psi^{-l}\circ \psi^k(x_0)
     \]
     for some choice of branch of a preimage. In other words, when one joins $x_0$ and $y$ by the shortest path in the tree on Fig.~\ref{fig:spine}, this path first goes upwards from $x_0$ to $\psi^{k(y)} (x_0)$, and then descends to $y=\psi^{-l(y)}(\psi^{k(y)} (x_0))$. We refer to $k(y)$ as the \emph{level} of the point~$y$. In analogy to $G(\cdot)$ considered earlier, define $\mathcal G_{x_0}(y):=l(y)-k(y)$.

     Now, for a sequence of random iterates $x_n=x_n(\omega)$, recurrently defined by $x_{n+1}=f_{\omega_n}(x_{n})$, consider the sequence $k_n=k_n(\omega)$ of levels, reached in the first $n$ iterates: let
     \[
        k_n:=\max_{j\le n} k(x_j)
     \]
     be the maximal level reached in the first $n$ steps. Also, define $l'_{n}=l'_{n}(\omega)$ by
    \[
    l'_{n}:=l(x_n) + k_n-k(x_n)=k_n+\mathcal
    G_{x_0}(x_n),
    \]
    the distance from the point $x_{(k_n)}$ on the spine to $x_n$; we call it the \emph{depth} at the moment~$n$.

    We also let
     \[
        \tau_{k}(\omega):=\min\{n \mid k(x_n)\ge k\}, \quad
        T_{k}(\omega):=\tau_{k+1}-\tau_{k}, \quad k=0,1,\dots
     \]
     be the first moment when the level $k$ is reached and the time spent after such a moment until reaching the level $k+1$, respectively. Note that $T_k$ may take zero values if at a given moment~$n$ the application of $f_{\omega_n}$ moves the point $x_n$ several levels up in one iteration. Also $\tau_k$ might \emph{a priori} take an infinite value (if a random trajectory never reaches level $k$ or above), but as we will see in Lemma~\ref{l:tailz} below, almost surely $\tau_k < \infty$ for all $k$.

     Finally, consider the common partition of the circle into intervals $J_i$, bounded by elements of $\DD_{\psi}$, on each of which each of the maps $f_j$ has the form
     \[
        f_j |_{J_{i}} = \psi^{-l_{ij}} \circ \psi^{k_{ij}}
     \]
     for some choice of branches of the preimages, and let
     \[
     K:=\max_{i,j} k_{ij}, \quad L:=\max_{i,j} l_{ij}.
     \]
     Then, the following statements hold:
     \begin{lemma}\label{l:statements}
            In one iteration, the level cannot be increased by more than $K$:
            \begin{equation}\label{eq:k-increase}
                k_{n+1}-k_n\le K.
            \end{equation}
            The depth at the first moment when a level $k$ is reached admits a uniform upper bound: for every $k$, one has $$l'_{\tau_k}\le L.$$ Furthermore, if for some $n$
            one has $x_n\in J_i$, $\omega_n=j$, then one has
            \begin{equation}\label{eq:mG-change}
            \mathcal G_{x_0}(x_{n+1})=\mathcal G_{x_0}(x_n) -m_{ij},
            \end{equation}
            where $m_{ij}=k_{ij}-l_{ij}$. In particular, if one has $k_n=k_{n+1}$, then the depth changes by the same amount as in~\eqref{eq:mG-change}, namely $l'_{n+1}=l'_n -m_{ij}$.
     \end{lemma}

     We will say that a level $k$ is \emph{$\eps_2$-good} if $x_{(k)}\in U_{\eps_2}(0)$; note that this property depends only on the choice of~$x_0$, and thus is not random. As the map $\psi$ is non-strictly expanding, taking preimages one gets that if $k$ is $\eps_2$-good, then for any $l\le N$ one has
     \[
        \psi^{-l}(x_{(k)}) \in U_{\eps_2}(\DD_{\psi;N}),
     \]
     for every choice of branch of the preimage.
     Hence, all iterations $x_n$ that are of depth at most $N$ and for which the corresponding level $k_n$ is $\eps_2$-good belong to $U_{\eps_2}(\DD_{\psi;N})$.
     Within $t$ steps after reaching a level~$k$, the sequence $x_n$ cannot reach a depth~$l'$ exceeding $L \cdot (t+1)$. Hence, after reaching an $\eps_2$-good level $k$ one needs at least $\lfloor \frac{N}{L} \rfloor-1$ steps for the trajectory to leave $U_{\eps_2}(\DD_{\psi;N})$ while staying on level~$k$. Thus, one has
     \begin{multline}\label{eq:T-sum}
        1-\bar{\msp}_n(U_{\eps_2}(\DD_{\psi;N})) = \frac{1}{n} \# \{j\le n \mid x_j \notin U_{\eps_2}(\DD_{\psi;N})\}
        \\
        \le \frac{1}{n}\left(\sum_{k\le k_n, \atop \text{$k$ is $\eps_2$-good}} \max(T_k-(\lfloor \tfrac{N}{L}\rfloor-1),0) + \sum_{k\le k_n, \atop \text{$k$ is not $\eps_2$-good}} T_k  \right)
     \end{multline}

     Now, note that $(\tau_k(\omega),x_{\tau_k(\omega)})$ is a Markov process; consider the conditional distribution of {$T_{k}(\omega)$} with respect to~$x_{\tau_k}$. The same drift arguments that we have already seen in Lemmata~\ref{l:applying-FL} and~\ref{l:exptail} imply a uniform exponential bound on its tails. Namely, we have the following lemma.
     \begin{lemma}\label{l:tailz}
         There exist constants $C, \beta>0$ such that for every $t,k$ and any $x'$ of level $k$, the conditional probability satisfies
         \[
            \Prob(T_{k}(\omega)>t \mid x_{\tau_k}=x') < C e^{-\beta t}.
         \]
                In particular, the time $T_k$ spent on each level~$k$ and the moment~$\tau_k$ of first entering such a level are finite almost surely.
     \end{lemma}
    Postponing its proof for the moment, let us use it to conclude the proof of Proposition~\ref{prop:acc-pts}.
    Consider the random variables that appear as summands in the right-hand side of~\eqref{eq:T-sum}: let
    \[
        \eta_k:=\max(T_k-(\lfloor \tfrac{N}{L}\rfloor-1),0) \cdot \Ind_{\text{$k$ is $\eps_2$-good}}, \quad
        \widetilde{\eta}_k:= T_k \cdot \Ind_{\text{$k$ is not $\eps_2$-good}}.
    \]
    Then the right-hand side of~\eqref{eq:T-sum} is equal to $\frac{1}{n}(\Eta_{k_n}+\wEta_{k_n})$, where
    \begin{equation}\label{eq:sum-E-T-eta}
    \Eta_{k'}:=\sum_{k\le k'} \eta_k, \qquad  \wEta_{k'}:=\sum_{k\le k'} \weta_k.
    \end{equation}
    {Now, consider the family of $\sigma$-algebras $\mF_{\tau_k}$ generated by the process $(x_n)_{n \ge 0}$ up to the stopping time $\tau_k$; then, all $\eta_k$, $\weta_k$ are measurable w.r.t.\ $\mF_{\tau_{k'}}$ once $k'> k$.} We then have the following uniform upper bound for the conditional expectations.
    \begin{lemma}\label{l:eta-conditional}
        For every $\eps'>0$ there exists $N$ such that one has a uniform upper bound
        \[
        \E(\eta_{k} \mid \mF_{\tau_{k}}) \le \eps'.
        \]
        Also, there exists $C_T$ such that
        \[
        \E(T_{k} \mid \mF_{\tau_{k}}) \le C_T.
        \]
    \end{lemma}
    \begin{proof}
        The uniform estimate for the conditional expectation of $T_k$ follows immediate from Lemma~\ref{l:tailz}: one has
        \begin{equation}\label{eq:T-cond-exp}
            \E(T_k \mid \mF_{\tau_{k}}) = \sum_{t=0}^{\infty} \Prob(T_k > t \mid x_{\tau_{k}}) < \sum_{t=0}^{\infty} C e^{-\beta t} =: C_T.
        \end{equation}
        For the conditional expectation $\E\left(\eta_k \mid \mF_{\tau_{k}}\right)$, one has
        \begin{equation}\label{eq:T-N-cond-exp}
                    \E\left(\max(T_k-(\lfloor \tfrac{N}{L}\rfloor-1),0) \mid \mF_{\tau_{k}}\right) < \sum_{t=\lfloor \frac{N}{L}\rfloor-1}^{\infty} \Prob(T_k > t \mid x_{\tau_{k}}) < \sum_{t=\lfloor\frac{N}{L}\rfloor-1}^{\infty} C e^{-\beta t}.
        \end{equation}
        For any given $\eps'>0$, choosing $N$ sufficiently large, we can ensure that the sum in the right-hand side of~\eqref{eq:T-N-cond-exp} does not exceed $\eps'$.
    \end{proof}

    Now, consider the sum of these conditional expectations: let
    \[
        S_{k'} = \sum_{k\le k'} \E(\eta_{k} \mid {\mF_{\tau_{k}}}),
        \qquad
        \wS_{k'} = \sum_{k\le k'} \E(\weta_{k} \mid {\mF_{\tau_{k}}}).
    \]
    We then have an upper bound for these sums: for every $\eps'>0$, let us fix the corresponding~$N$ provided by Lemma~\ref{l:eta-conditional}. Then
    \begin{equation}\label{eq:S-bound}
        S_{k'} = \sum_{k\le k'} \E(\eta_{k} \mid {\mF_{\tau_{k}}}) \le k' \eps'.
    \end{equation}
    At the same time, as the point~$x_0$ satisfies the condition~\eqref{eq:conv-0}, we have
    \[
        \frac{1}{k'} \# \{k\le k' \mid \text{$k$ is not $\eps_2$-good}\} \to 0, \quad k'\to \infty.
    \]
    Combining this with the second conclusion of Lemma~\ref{l:eta-conditional}, we get
    \[
        \wS_{k'} \le C_T \cdot \# \{k\le k' \mid \text{$k$ is not $\eps_2$-good}\},
    \]
    hence, for all sufficiently large $k'$, we also have the non-random upper bound $\wS_{k'}\le \eps'k'$.

    Extending the estimates for the sums of conditional expectations $S_k$, $\wS_k$ to those for the sums $\Eta_k$, $\wEta_k$ is done using standard large-deviation-type arguments. Namely, consider the differences $\Eta_{k'}-S_{k'}$ and $\wEta_{k'}-\wS_{k'}$. Note that both these processes are martingales (with respect to the $\sigma$-algebras $\mF_{\tau_{k'+1}}$). Indeed, each $\eta_k$ is measurable w.r.t.\ $\mF_{\tau_{k+1}}$; moreover,
    \[
        \Eta_{k'}-S_{k'} = (\Eta_{k'-1}-S_{k'-1}) + \left(\eta_{k'} - \E(\eta_{k'}\mid \mF_{\tau_{k'}})\right),
    \]
    and the conditional expectation of the second summand w.r.t.\ $\mF_{\tau_{k'}}$ vanishes.

    Finally, the exponential moments of the conditional distributions of increments $\eta_k=\Eta_k-\Eta_{k-1}$ and $\weta_k=\wEta_k-\wEta_{k-1}$ w.r.t.~$\mF_{\tau_k}$ are uniformly bounded due to Lemma~\ref{l:tailz}. The same statement thus holds for the increments of the martingales $\Eta_k-S_k$ and $\wEta_k-\wS_k$ that are conditionally centred versions of $\eta_k$ and $\weta_k$ respectively. Thus, a martingale version of Bernstein's inequality {(see for example \cite[Theorem 1.2A]{MR1681153})} provides a large-deviation-type bound: for every $\eps'>0$ there exist $c_{\Eta}, c_{\wEta}>0$ such that for all sufficiently large $k'$,
    \[
        \Prob(\Eta_{k'} - S_{k'} > \eps' k') < \exp(- c_{\Eta} k'), \quad
        \Prob(\wEta_{k'} - \wS_{k'} > \eps' k') < \exp(- c_{\wEta} k').
    \]
    Applying the Borel--Cantelli lemma, we get that (for every fixed $\eps'$) almost surely for all sufficiently large $k'$ one has
    \[
        \Eta_{k'}+\wEta_{k'} \le 2\eps' k' + S_{k'} + \wS_{k'} \le 4 \eps' k'.
    \]
    Now, one has $k_n<K\cdot n$ by~\eqref{eq:k-increase} in Lemma~\ref{l:statements}; thus, for all sufficiently large $n$, the right-hand side of~\eqref{eq:T-sum} almost surely does not exceed
    \[
        \frac{1}{n} (\Eta_{k_n} + \wEta_{k_n}) \le 4\eps' \frac{k_n}{n} \le 4 K \eps'.
    \]

    Thus, taking $\eps':=\frac{\eps_1}{4K}$ and choosing the corresponding $N$, we obtain the desired~\eqref{eq:eps-1-2}.

     This concludes the proof of Proposition~\ref{prop:acc-pts} for a non-$\psi$-preperiodic point $x_0$.

     Lastly, if $x_0$ is $\psi$-preperiodic, then due to~\eqref{eq:conv-0} one has $\psi^n(x_0)=0$ for some $n$, and thus $x_0\in\DD_{\psi}$. In this case, the conclusion follows from Lemma~\ref{l:exptail}. This concludes the proof of Proposition~\ref{prop:acc-pts}.
\end{proof}

     We have established Proposition~\ref{prop:acc-pts}, together with Theorem~\ref{t:parabolic},
     this concludes the proof of Theorem~\ref{thm:physical}.
     \end{proof}

     \begin{proof}[Proof of Lemma~\ref{l:tailz}]
        The proof largely mimics the proof of Lemma~\ref{l:exptail}.
        Namely, the same estimates as in the proof of Lemma~\ref{l:applying-FL} imply that the function $V_{x_0}(x)=e^{\theta \mathcal G_{x_0}(x)}$ satisfies, for any $x\in \mO_{x_0}$,
        \begin{equation}\label{eq:gamma-V}
            \E V_{x_0}(f(x)) \leq \gamma V_{x_0}(x),
        \end{equation}
        where $\theta>0$ and $\gamma<1$ are the same as in Lemma~\ref{l:applying-FL} (and this choice does not depend on~$k$).
        Consider now the sequence of random variables
        \begin{equation}\label{eq:xi-t-def}
            \xi_t := \gamma^{-t} e^{\theta k} \,  V_{x_0}(x_{\tau_k+t}) \Ind_{T_{k}>t} = \gamma^{-t} e^{\theta l'_{\tau_k+t}}\Ind_{T_{k}>t}.
        \end{equation}
        Then~\eqref{eq:gamma-V} implies that the family $\xi_t$ forms a nonnegative supermartingale (with respect to the natural family of $\sigma$-algebras $\mF_t$, generated by $\omega_0,\dots,\omega_{\tau_k+ t-1}$): either $T_k(\omega)\le t$, and then $\xi_t(\omega)=\xi_{t+1}(\omega)=0$, or
            \[
        \E(\xi_{t+1}\mid \mF_t) \le \gamma^{-(t+1)} \E (V_{x_0}(x_{\tau_k+t+1})\mid \mF_t) \le \gamma^{-(t+1)} \cdot \gamma V_{x_0}(x_{\tau_k+t}) \le \xi_t.
    \]
    In particular, the sequence of expectations $\E \xi_t$ is non-increasing. Now, the initial value satisfies
    \[
        \xi_0=e^{\theta l'_{\tau_k}} \le e^{\theta L}.
    \]
    On the other hand, if $T_k> t$, then the indicator in the right-hand side of~\eqref{eq:xi-t-def} is equal to~$1$, and as the depth $l'_{\tau_k+t}$ is nonnegative, we have
    \[
        \xi_t =\gamma^{-t} e^{\theta l'_{\tau_k+t}} \ge \gamma^{-t}.
    \]
    Since the bound $\xi_0 \le e^{\theta L}$ holds uniformly in $x_{\tau_k}$
    (by Lemma~\ref{l:statements}), Markov's inequality gives the desired
    conditional estimate
    \[
        \Prob(T_k > t \mid x_{\tau_k} = x')
        \le \frac{\E(\xi_t \mid x_{\tau_k} = x')}{\gamma^{-t}}
        \le \gamma^t \cdot \E(\xi_0 \mid x_{\tau_k} = x')
        \le e^{\theta L} \cdot \gamma^t.
    \]
\end{proof}

    \begin{proof}[Proof of Theorem~\ref{thm:circle}]
    Take any random dynamical system~$(f_{1,(0)},\dots,f_{s,(0)};p_1,\dots,p_s)$, consisting of maps $f_{i,(0)}$ from the Thompson semigroup $\TT^+$, that satisfies the assumptions of Theorem~\ref{thm:atomic}, contains the doubling map $\varphi$, and has no common invariant measure. One such example is provided by the piecewise-affine system from Example~\ref{ex:PL}; note that the absence of a common invariant measure is guaranteed by Lemma~\ref{lem:nocommon}.

    Now, take any degree~$2$ map $\psi:\Sc\to\Sc$ of class $C^{\infty}$ such that
    \begin{itemize}
        \item $\psi$ is $C^{\infty}$-tangent to the identity at its fixed point~$0$,
        \item $\psi'(x)>1$ at every point $x\neq 0$,
    \end{itemize}
    and let $h$ be the conjugacy between $\psi$ and $\varphi$ (cf.~\eqref{eq:psi-phi-h}). Take the maps
    \[
        f_i=h^{-1}\circ f_{i,(0)}\circ h
    \]
    and consider the random dynamical system (cf.~\eqref{eq:conjugates})
    \[
        (f_1,\dots,f_s;p_1,\dots,p_s).
    \]
    For the particular case of Example~\ref{ex:PL}, this provides the system that appears in Example~\ref{ex:conju} (with the graphs shown in Fig.~\ref{fig:GS-example}).
    Then, by Corollary~\ref{cor:smooth}, the constructed system~\eqref{eq:conjugates} consists of $C^{\infty}$-smooth locally invertible maps, and admits an atomic stationary measure~$\msp$, supported on the (countable) set~$\DD_{\psi}$. Theorem~\ref{thm:physical} (as $\varphi$ is one of the $f_{i,(0)}$) then states that the measure~$\msp$ is the unique stationary measure supported on~$\DD_{\psi}$, and that this measure is a physical measure for this system. This concludes the proof of Theorem~\ref{thm:circle}.
    \end{proof}

\begin{proof}[Sketch of the proof of Theorem~\ref{t:parabolic}]
    Recall that $\psi$ is conjugate to the doubling map $\varphi:x \mapsto 2x \mod 1,$ that is, $h \circ \psi  =  \varphi\circ h$ for some circle homeomorphism~$h$. Consider the interval $J=[a,b]:=h^{-1}([\frac{1}{3},\frac{2}{3}])$, bounded by a period-$2$ orbit of~$\psi$, and a point $c=h^{-1}(\frac{1}{2})\in J$: for this point, $\psi(c)=0$.

    \begin{figure}[ht!]
        \centering
        \includegraphics[height=3cm]{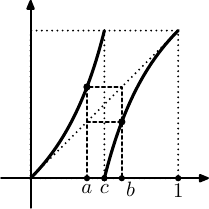}
\\[7mm]
        \includegraphics[width=7cm]{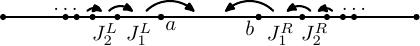}
        \caption{Interval $J$ and the first return map on it}
        \label{fig:first-return}
    \end{figure}

    Now, consider the first return map on $J$. To describe it, note that $\psi([a,c])=[b,1]$ and that $\psi([c,b])=[0,a]$. Now, the interval $J$ is a fundamental domain for both branches of $\psi_0$, viewed as a repelling map in the neighbourhood of~$0$; hence, the interval $[0,a]$ is decomposed into a chain of adjacent preimages,
    \[
    [0,a]=\bigcup_{j\geq 1} J^L_j, \, \text{ where } J^L_j :=\psi_0^{-j}(J).
    \]
    Similarly,
    \[
    [b,1]=\bigcup_{j\geq1} J^R_j, \, \text{ where } J^R_j :=\psi_1^{-j}(J).
    \]
    We thus get the decomposition of $[a,b]$ into intervals $\psi_1^{-1}(J^L_j)$ and $\psi_0^{-1}(J^R_j)$, each of which is mapped by the corresponding power of $\psi$ onto~$J$. Finally, the first return map $\Phi$ on $J$ is given by the union of the maps
    \[
        f^L_j:= \psi^{j+1}|_{\psi_0^{-1}(J^L_j)}, \quad f^R_j:= \psi^{j+1}|_{\psi_1^{-1}(J^R_j)}.
    \]

    Now, as $\psi$ is uniformly expanding on $J$, the return branches satisfy a uniform bound on the distortion coefficient: there exists $C_{\kappa}$ such that the distortion
    \[
        c_{\kappa}(f) := \left\|\frac{D^2f(x)}{(Df(x))^2}\right\| = \|D \log D f^{-1}\|
    \]
    does not exceed $C_{\kappa}$ for all $f=f^L_j, f^R_j$, $j=0,1,\dots$.

    These properties suffice to conclude that the first return map $\Phi$ has an absolutely continuous invariant measure with a positive continuous density. This is done by the standard technique of considering the iterations of the Lebesgue measure and controlling the ratio of its densities at two different points; the distortion estimates imply that this ratio stays bounded. This measure can also be shown to be ergodic; for Lebesgue-a.e.\ initial point $x_0\in J$, the iterates $\Phi^m(x_0)$ are distributed with respect to this measure.

    Finally, one notices that the first return time has a non-integrable singularity (of type at least $\frac{1}{x}$) at the point~$c$. Therefore, the Birkhoff averages of the return time tend to infinity for Lebesgue-a.e.\ initial point, and hence the $\psi$-iterates spend an asymptotically full proportion of the time near the neutral fixed point~$0$, giving~\eqref{eq:conv-0}.
\end{proof}

\subsection{Rotation-symmetrized example: description of stationary measures}\label{ss:concluding}
There is a modification of Example~\ref{ex:PL} in which the stationary measure $\msp$ admits a transparent description in terms of $\varphi$-invariant measures. Let $f_1,f_2$ be the maps of Example \ref{ex:PL}, and $R_{\alpha}$ be the rotation by~$\alpha$, that is, $R_{\alpha}:x\mapsto x+\alpha \mod \Z$. Consider the system obtained by replacing each $f_i$ with all of its conjugates $R_{-a/4}\,f_i\,R_{b/4}$ for $a,b\in\{0,1,2,3\}$ independent and uniform. This yields the random dynamical system
\begin{equation}\label{eq:f-ab}
\bigl(R_{-a/4}\,f_i\,R_{b/4}\,;\,\tfrac{p_i}{16}\bigr)_{i\in\{1,2\},\;a,b\in\{0,1,2,3\}}
\end{equation}
on~$\Sc$. Then, the Markov chain associated to this system satisfies the following two $R_{1/4}$-invariance properties:
\begin{itemize}
    \item For any $x,x'\in\Sc$, the probabilities that the Markov chain goes in one step from $x$ to each of $x'$, $R_{1/4}x'$, $R_{1/2}x'$, $R_{3/4}x'$ are all equal.
    \item For any $x,x'\in\Sc$, the probabilities that the Markov chain goes in one step to $x'$ from each of $x$, $R_{1/4}x$, $R_{1/2}x$, $R_{3/4}x$ are all equal.
\end{itemize}
This $R_{1/4}$-invariance motivates the consideration of the image under the quotient by this symmetry: let $y_n=\varphi_j^2 (x_n)$. Then, the random process $(y_n)$ is also a Markov chain. To find its transition probabilities, note that conditionally to $y_n=y$, all the four $\varphi^{2}$-preimages of $y$ are equiprobable as~$x_n$; thus (considering the maps $f_1$, $f_2$ on each of the four intervals $[\frac{a}{4},\frac{a+1}{4}), \, a=0,1,2,3$; compare with Fig.~\ref{fig:PL-example}), we get
\begin{equation}\label{eq:MC}
    y_{n+1} = \begin{cases}
    \varphi(y_{n}) & \text{with probability $p_+$}, \\
    y_{n} & \text{with probability $p_0$},\\
    \varphi_0^{-1}(y_{n}) & \text{with probability $\frac{1}{2}p_-$},\\
    \varphi_1^{-1}(y_{n}) & \text{with probability $\frac{1}{2}p_-$},
\end{cases}
\end{equation}
where $p_+,p_0,p_-$ are the probabilities that are given by
\[
p_+ = p_1 + \frac{p_2}{4}, \quad p_- = \frac{p_2}{2}, \quad p_0 = \frac{p_2}{4},
\]
and $\varphi_0^{-1}$ and $\varphi_1^{-1}$ are two branches of $\varphi^{-1}$, corresponding to
\[
\varphi_0:=\varphi|_{[0,\frac{1}{2})}, \quad
\varphi_1:=\varphi|_{[\frac{1}{2},1)}.
\]
(Note that~\eqref{eq:MC} is very close to the systems studied in \cite{MR4975394}.)

Now, the stationary measures of the random dynamical system~\eqref{eq:f-ab} are in bijective correspondence with those of the Markov chain~\eqref{eq:MC}. Namely, to a stationary measure~$\msp$ for~\eqref{eq:f-ab} corresponds a stationary measure $\msp'=\varphi^2_* \msp$ for~\eqref{eq:MC}. On the other hand, to a stationary measure $\msp'$ for~\eqref{eq:MC} corresponds a stationary measure $\msp=Q^2 \msp'$, where
\begin{equation}\label{eq:Q-def}
Q := \tfrac12\bigl((\varphi_0^{-1})_* + (\varphi_1^{-1})_*\bigr)
\end{equation}
is the operator, acting on probability measures on the quotient circle, that corresponds to taking each of two $\varphi$-preimages equiprobably.

Next, the system~\eqref{eq:f-ab} is expanding in average if an only if~$p_+>p_-$. Now, Markov chain~\eqref{eq:MC} also preserves the set of dyadic rationals~$\DD$, and as the chain applies $\varphi_0^{-1}$ and $\varphi_1^{-1}$ equiprobably, its stationary measure $\msp'$ should be equidistributed on every distance $n$ set $R_j:=\{y\in \DD \mid \dT(y)=j\}$ (the measure $\msp'$ is unique due to the positive recurrence, and thus the group of the automorphisms of the rooted tree should preserve it).
Hence, describing its stationary measure $\msp'$ is equivalent to finding a sequence of masses $m_j=\msp'(R_n)$, as then one has $\msp'(y)=\frac{m_j}{\# R_j}$ for any $y\in R_j$. These masses are the distribution of the push-forward image $\msp''=\dT_* \msp'$, that is a stationary measure for the quotient Markov chain $j_n=\dT(y_n)$. The latter is the Markov chain on $\{0,1,2,\dots\}$ with the transition  probabilities (see Fig.~\ref{fig:MC-line})
\begin{equation}\label{eq:MC-line}
    j_{n+1} = \begin{cases}
    j_{n}-1 & \text{with probability $p_+$}, \\
    j_{n} & \text{with probability $p_0$},\\
    j_{n}+1 & \text{with probability $p_-$},
    \end{cases}
    \quad j_n>0,
\end{equation}
with the natural modification at $j_n=0$: one jumps away from it to~$j_{n+1}=1$ with the probability~$\frac{p_-}{2}$, otherwise staying at the state~$0$.

\begin{figure}
    \centering
    \includegraphics[width=0.5\linewidth]{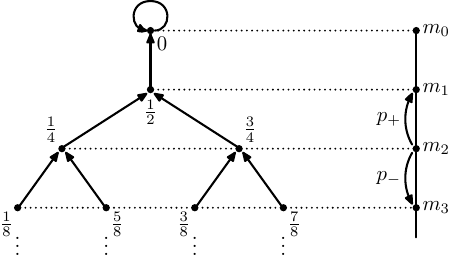}
    \caption{Tree of iterations of $\varphi$ and the Markov chain and the stationary measure for the distances $\dT(y_n)$}
    \label{fig:MC-line}
\end{figure}

The stationarity condition for the Markov chain~\eqref{eq:MC-line} is given by the relations
\begin{eqnarray}
    m_j = p_+ m_{j+1} + p_- m_{j-1} + p_0 m_j,  \quad  j=2,3\dots,  \label{eq:relation}
    \\
    m_1 = p_+ m_2 + \frac{p_-}{2}\, m_0 + p_0 m_1;
    \label{eq:relation-1}
    \\
    m_0= p_+ m_{1} + \left(p_+ +\frac{p_-}{2} + p_0\right) m_{0}
    \label{eq:relation-0}
\end{eqnarray}
note that the relation~\eqref{eq:relation-0} follows from~\eqref{eq:relation} and~\eqref{eq:relation-1}: the total mass is conserved, thus the stationarity in all points but one implies the stationarity for the last one, too.

Now, the characteristic equation $1=p_+ r + p_- r^{-1} + p_0 $ for the recurrence relation~\eqref{eq:relation} has roots $r=1$ (that corresponds to the constant sequence and not to the probability measure) and $r=\frac{p_-}{p_+}<1$. Thus, the stationary measure for the Markov chain~\eqref{eq:MC-line} is given by
\begin{equation}\label{eq:m-j}
m_0 = 2c, \qquad
m_j = c\, r^j, \quad j= 1,2,\dots
\end{equation}
with the normalization constant $c = \tfrac{1-r}{2-r}$.

The equidistribution of $\msp'$ on the level sets together with~\eqref{eq:m-j}
gives the single formula
\begin{equation}\label{eq:nu-prime}
\msp'(\{y\})
= 2c \left(\frac{r}{2}\right)^{\dT(y)}, \qquad y\in\DD,
\end{equation}

It remains to lift $\msp'$ back to a stationary measure on the original circle. As we have already seen, it is given by $\msp=Q^2 \msp'$ (equidistributing the weight of any given point $y\in\DD$ between its four $\varphi^{-2}$-preimages).
As this procedure is invariant under the conjugacy, we thus obtain an explicit description for the atomic physical stationary measure for the system, built from~\eqref{eq:f-ab} using the function $\psi$ as in Theorem~\ref{thm:physical} (note that this conjugated system can also be written explicitly in terms of compositions of $\psi$ and branches $\psi_0^{-1},\psi_1^{-1}$ it in the same way as in Example~\ref{ex:conju}).

It is interesting to note that the explicit description above can be seen as a particular case of a more general construction. Namely, assume that we are given any $\varphi$-invariant measure~$\msp_{(0)}$ (the description above would correspond to~$\msp_{(0)}=\delta_0$). The following statement allows to construct from it a stationary measure for the Markov chain~\eqref{eq:MC} and hence for the random dynamical system~\eqref{eq:f-ab}.
\begin{proposition}\label{p:new-stationary}
    Let $\msp_{(0)}$ be any invariant measure for the doubling map~$\varphi$. Then the measure
    \begin{equation}\label{eq:nu-Q}
        \msp':= c' \sum_{j=0}^{\infty} (rQ)^j \msp_{(0)}, \quad r=\frac{p_-}{p_+}, \quad c'=1-r,
    \end{equation}
    is a stationary measure for the Markov chain~\eqref{eq:MC}, and $\msp=Q^2\msp'$ is thus a stationary measure for~\eqref{eq:f-ab}.
\end{proposition}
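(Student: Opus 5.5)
The plan is to verify stationarity directly at the level of measures, using the transition operator of the Markov chain~\eqref{eq:MC}. By~\eqref{eq:MC}, a measure $\mu$ on the circle is mapped by one step of the chain to
\[
P\mu := p_+\,\varphi_*\mu + p_0\,\mu + p_-\,Q\mu ,
\]
with $Q$ as in~\eqref{eq:Q-def}. So the first claim amounts to showing $P\msp'=\msp'$ and that $\msp'$ is a probability measure. The whole computation rests on two structural identities. First, $\varphi_*Q=\id$ on measures, since each branch $\varphi_0^{-1},\varphi_1^{-1}$ is a right inverse of $\varphi$. Second, $\varphi_*\msp_{(0)}=\msp_{(0)}$, which is the invariance hypothesis.

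Well-definedness comes first. I assume the expanding-on-average regime $p_+>p_-$, so that $r=p_-/p_+<1$; otherwise $c'=1-r$ would not be a positive normalising constant. Since $Q$ is a Markov operator, each $Q^j\msp_{(0)}$ is a probability measure. Hence the series in~\eqref{eq:nu-Q} converges in total variation, and its total mass is $c'\sum_j r^j=1$.

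Next I compute the three terms of $P\msp'$ term by term, using linearity and continuity of $\varphi_*$ and $Q$ on finite measures.
\begin{itemize}
\item For $j\ge1$, $\varphi_*Q^j\msp_{(0)}=Q^{j-1}\msp_{(0)}$, while for $j=0$ invariance gives $\varphi_*\msp_{(0)}=\msp_{(0)}$. Reindexing, this yields $p_+\varphi_*\msp' = c'p_+\msp_{(0)} + p_+ r\,\msp' = c'p_+\msp_{(0)}+p_-\msp'$.
\item Shifting the index, $p_-Q\msp' = c'p_- r^{-1}\sum_{j\ge1}r^jQ^j\msp_{(0)} = p_+\bigl(\msp'-c'\msp_{(0)}\bigr)$.
\end{itemize}
Adding these two together with $p_0\msp'$, the $\msp_{(0)}$ terms cancel and leave
\[
P\msp' = (p_-+p_++p_0)\,\msp' = \msp' .
\]
This is exactly where the specific choice $r=p_-/p_+$ enters: it is what makes the two boundary contributions $\pm c'p_+\msp_{(0)}$ cancel.

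For the second claim I invoke the correspondence described just before the proposition: stationary measures $\msp'$ of~\eqref{eq:MC} lift to stationary measures $Q^2\msp'$ of~\eqref{eq:f-ab}. This correspondence comes from the two $R_{1/4}$-invariance properties, together with the fact that, conditionally on $y_n$, the four $\varphi^2$-preimages are equiprobable. If one wants this step self-contained, it amounts to checking the intertwining relation
\[
\textstyle\sum_{i,a,b}\tfrac{p_i}{16}\,(R_{-a/4}f_iR_{b/4})_*\,Q^2 \;=\; Q^2P .
\]
This can be verified on each of the four quarter-intervals from the explicit form of $f_1,f_2$. I expect this verification to be the only mildly laborious step. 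The main identity above is a short telescoping computation, with the only subtlety being the $j=0$ term, where invariance of $\msp_{(0)}$ replaces $\varphi_*Q=\id$. Finally, as a consistency check, taking $\msp_{(0)}=\delta_0$ should reproduce~\eqref{eq:nu-prime}.
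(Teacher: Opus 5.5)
Your proposal is correct and is essentially the paper's argument: apply the one-step operator $p_+\varphi_*+p_0\,\id+p_-Q$ to the series, use $\varphi_*Q=\id$ and $\varphi_*\msp_{(0)}=\msp_{(0)}$, and regroup by powers of $Q$. Your explicit telescoping (the cancellation of the $\pm c'p_+\msp_{(0)}$ terms), your remark that $p_+>p_-$ is needed for $\msp'$ to be a probability measure, and the intertwining identity you state for the lift by $Q^2$ (which does hold) are all correct elaborations of steps the paper leaves implicit.
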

\begin{proof}
    The one-step transition operator for the Markov chain~\eqref{eq:MC} is $p_+\varphi_* + p_0\cdot \id + p_- Q$. Applying it to~\eqref{eq:nu-Q}, using $\varphi_* Q=\id$ and $\varphi_* \msp_{(0)}=\msp_{(0)}$ and grouping together terms with the same power of~$Q$ completes the proof.
\end{proof}
As it was already mentioned, one can see from a straightforward computation that the description~\eqref{eq:m-j}, \eqref{eq:nu-prime} above is a particular case of Proposition~\ref{p:new-stationary}, applied to the Dirac measure $\msp_{(0)}=\delta_0$ as an invariant measure of~$\varphi$.

\section{Constructions of examples}\label{s:examples-proofs}
In this section we discuss the constructions and properties of Examples~\ref{ex:torus}--\ref{ex:sphere2}.

\begin{proof}[Construction of Example~\ref{ex:torus}]
    For $\eps>0$ small enough, each $F_j$ is a diffeomorphism onto
    its image with $F_j(M)\subset\mathrm{int}\,M$, and the projection
    $\pi(x,\cwnew)=x$ satisfies $\pi\circ F_j=f_j\circ\pi$. The system is
    thus a random contracting skew product over $(f_j;p_j)$, a variant of
    the classical Smale
    solenoid~\cite[Ch.~17.1]{katok_introduction_1995}.
    Next, it is known that for the contracting random skew products for every stationary measure~$\msp$ in the base there exists a unique stationary measure~$\hmsp$ of the skew product. Indeed, consider the Perron-Frobenius operator $P$ of the skew product. Its action preserves the space of measures that project to $\msp$ (due to its stationarity). Now, when restricted to this space, the operator $P$ contracts in the fiberwise transport metric (integrated with respect to~$\msp$ in the base); this implies the uniqueness of its fixed point~$\hmsp$.

    For Lebesgue-a.e.\ point $b_0=(x_0,z_0)\in M$, the distribution of the first coordinate of its trajectory almost surely converges to~$\msp$ due to the choice of the maps~$f_i$ and Theorem~\ref{thm:circle}. On the other hand, for the random trajectory of the point $b_0$ almost surely all the accumulation points of its distribution are stationary measures of the system. As the stationary measure~$\hmsp$ is unique in the class of measures on~$M$ that project to~$\msp$, these distributions for a Lebesgue-generic point $b_0$ thus converge to~$\hmsp$, proving~\eqref{eq:convergence-hmsp}. Hence, the measure $\hmsp$ is the physical measure of the system.

    Finally, any common $F_j$-invariant probability on $M$ would push forward
    under $\pi$ to a common $f_j$-invariant probability on $\Sc$,
    contradicting Theorem~\ref{thm:circle}; the same theorem forbids
    a common $f_j$-invariant measure on
    $\{\D_a\}_{a\in\Sc}= \Sc$.
\end{proof}

\begin{proof}[Construction of Example~\ref{ex:sphere}]

    Let the solid torus $M=\Sc\times\D$ be one of the two tori in the standard Heegaard splitting of the 3-sphere:
    \[
        \Sph = M\cup M', \qquad M'=\D\times\Sc, \qquad
        M\cap M' = \partial M = \partial M'=\Sc\times \Sc.
    \]
    We claim that there is a modification $\hF_j:M\to M$ of the maps $F_j$ from Example~\ref{ex:torus} that is isotopic to the identity in the class of maps $\hF:M\to \Sph$ that are diffeomorphisms onto their images.
    Once this is done, by the isotopy extension theorem (see for
    example~\cite[Theorem~8.1.3]{MR448362}), each~$\hF_j$ extends to a diffeomorphism $\tF_j:\Sph\to\Sph$.

    It is noted in Katok and Hasselblatt~\cite[Chapter 17.1]{katok_introduction_1995} that the solenoid map $F_j$, given by~\eqref{eq:F-j-def}, cannot be extended to a diffeomorphism of the sphere~$\Sph$, if the corresponding $f_j$ is of degree~$\dg_j>1$.

    One way to see it is that the parallels on the solid torus (that is, circles $\{z=c \in \D\}$) are unlinked, but become linked after the application of~$F_j$. A manifestation of this effect can be seen in the following way. Imagine that one puts a thick rope or a cord circled many times, and then lifts it and straightens by holding both ends and pulling them away from each other. Then, the cord becomes twisted.

    So in order to have a map that can be extended to the sphere (and thus necessarily that does not link the parallels), one has to ``untwist'' the map $F_j$. This is done in the following lemma (in fact, this construction is a solution to~\cite[Exercise~17.1.4]{katok_introduction_1995}).

    \begin{lemma}\label{l:untwist}
        For every smooth monotone circle map $f:\Sc\to\Sc$ of topological degree $\dg>0$, for every $\eps>0$ sufficiently small, the map
        \begin{equation}\label{eq:hF-def}
            \hF:M\to \Sph, \quad \hF(x,\cwnew) = \left(f(x), \frac{1}{2} e^{2\pi i x}+ e^{-2\pi i (\dg-1)x} \cdot \eps \cwnew\right)
        \end{equation}
        is isotopic to the identity
        in the class of diffeomorphisms onto their images.
    \end{lemma}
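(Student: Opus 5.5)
The plan is to reduce $\hF$ to a standard model by a preliminary isotopy, and then to use the fact that an embedding of a solid torus into $\Sph$ is determined, up to isotopy, by its core knot together with its framing.

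\emph{Step 1 (straightening the base map).} Since $f$ is smooth, monotone (we use $f'>0$, i.e.\ local invertibility) and of degree $\dg$, the straight-line homotopy of lifts $f_t=(1-t)f+t\,\dg x$ consists of local diffeomorphisms of degree $\dg$, with derivatives bounded below and above uniformly in $t$. Consequently the $f_t$-preimages of any point are $\delta$-separated for some $\delta>0$ independent of $t$. In formula~\eqref{eq:hF-def}, distinct $x,x'$ with $f_t(x)=f_t(x')$ give disc centres $\tfrac12 e^{2\pi i x}$ and $\tfrac12 e^{2\pi i x'}$ at distance bounded below by a constant $c(\delta)>0$. Choosing $\eps< c(\delta)/2$ therefore makes every map $\hF_t$ (with $f_t$ in place of $f$) injective; it is also immersive, since the determinant of the differential is $f_t'\cdot\eps^2\ne 0$. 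So we reduce to $f(x)=\dg x$. In the same way, shrinking $\eps$ further is an isotopy through embeddings, so the exact size of $\eps$ is irrelevant.

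\emph{Step 2 (core knot).} The image of $\hF$ is a tubular neighbourhood of the core curve $\gamma(x)=(\dg x,\tfrac12 e^{2\pi i x})$. This curve lies on the torus $\{|\cwnew|=\tfrac12\}$ and is a $(\dg,1)$ torus curve: it winds $\dg$ times longitudinally and once meridionally. Seen from the complementary solid torus $M'$, it winds once along the core of $M'$, so it is isotopic in $\Sph$ to an unknot. Concretely, I will write an explicit isotopy $\gamma_s$, $s\in[0,1]$, from $\gamma$ to the core $\{\cwnew=0\}$ of $M$. To build it, use the Heegaard description $\Sph=M\cup M'$: the $(\dg,1)$ curve is a $(1,\dg)$ curve of $M'$, and can be slid radially through the meridian discs of $M'$ to the core of $M'$. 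That core is in turn isotopic to the core of $M$, since both are unknots; it suffices to check that each is the unknot via the standard fibration $S^3\subset\C^2$.

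\emph{Step 3 (framing).} By the uniqueness of tubular neighbourhoods, any embedding of $M$ whose core is isotopic to the core of $M$ is isotopic to the identity, provided its framing also agrees. Here the framing is measured by the linking number of the image of a parallel $\{\cwnew=c\}$, $c\neq0$, with the image of the core. For the untwisted solenoid map $F_j$ of~\eqref{eq:F-j-def} this linking number is nonzero; this is precisely the twisting observed in Katok--Hasselblatt. The factor $e^{-2\pi i(\dg-1)x}$ rotates the fibre $\dg-1$ times backwards as the core is traversed. I will compute the linking number of $\hF(\{\cwnew=c\})$ with $\gamma$ as the winding of the normal vector field $x\mapsto e^{-2\pi i(\dg-1)x}$ relative to the Seifert framing of the $(\dg,1)$ torus knot, which is $\dg\cdot 1$ relative to the torus framing. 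The aim is to verify that this total vanishes, so that the twist correction exactly cancels the $\dg-1$ discrepancy.

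This framing computation, with consistent sign conventions, is the step I expect to be the main obstacle. If the count comes out off by one, I would re-derive it directly by computing the linking number of two parallel $(\dg,1)$-type curves on nested tori. Once the framing is shown to vanish, the isotopy of cores from Step 2 extends to an isotopy of tubular neighbourhoods that respects framings, which yields the claimed isotopy to the identity in the class of diffeomorphisms onto their images.
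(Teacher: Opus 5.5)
Your overall route is viable and differs from the paper's. The paper uses no knot theory. It builds an explicit isotopy of holomorphic maps near $\{|z|=1,\ w=0\}\subset\C^2$ that commute with the weighted action $(z,w)\mapsto(\rho z,\rho^{\dg}w)$ (a coordinate swap, the sliding maps $S_t$, a linear rescaling, a linear interpolation), and then projects along this action to $\Sph$. You instead reduce to $f(x)=\dg x$, observe that the core is a $(\dg,1)$ torus curve and hence an unknot, and invoke the classification of orientation-preserving embeddings of a solid torus by oriented core and framing. That would work, but the proposal stops exactly at the only step with content: the framing is never computed, only announced as ``the aim''. Steps~1--2 apply verbatim to the untwisted solenoid map~\eqref{eq:F-j-def}, or to any factor $e^{2\pi i kx}$ in place of $e^{-2\pi i(\dg-1)x}$. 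For all but one $k$ the conclusion is false, so as written nothing separates $\hF$ from maps that do not extend to $\Sph$. Moreover, your sketched bookkeeping is off by one. The field $e^{-2\pi i(\dg-1)x}$ winds $-(\dg-1)$ times relative to the constant frame of the $w$-discs. The torus framing (the one differing by $\dg\cdot1$ from the Seifert framing) is given by the radial normal $e^{2\pi ix}$, which itself winds once relative to that constant frame. Adding $\dg$ and $-(\dg-1)$, as your text suggests, gives $1$, not $0$.

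To close the gap, first fix the identification in which $\Sc\times\{c\}$, $|c|=1$, bounds the meridian disc $\D\times\{c\}$ of $M'$. This is the paper's embedding, and you should state it: both the framing $0$ of the inclusion and your claim that the core winds once along $M'$ depend on it. Then for $\gamma\subset M\setminus C$, with $C=\Sc\times\{0\}$, $\mathrm{lk}(C,\gamma)$ is the winding number of $\arg w$ along $\gamma$, normalizing signs so that a small counterclockwise loop in a $w$-disc has linking number $+1$ with $C$. Let $K(x)=(\dg x,\frac12e^{2\pi ix})$ and let $K'$ be its push-off with offset $\eps c\,e^{-2\pi i(\dg-1)x}=\eps c\,e^{2\pi ix}e^{-2\pi i\dg x}$. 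Let $K'_0$ be the radial push-off with offset $\eps e^{2\pi ix}$. Since $K'_0\subset\{|w|=\frac12+\eps\}$ and $K$ is homologous to $\dg\,C$ inside $\{|w|<\frac12+\frac{\eps}{2}\}$, one gets $\mathrm{lk}(K,K'_0)=\dg\cdot1$. The offset of $K'$ winds $-\dg$ times relative to the radial one. Each counterclockwise turn adds $\mathrm{lk}(K,\mu_K)=+1$, where $\mu_K$ is a small counterclockwise loop around $K$ in a $w$-disc, because $K$ crosses the $w$-discs in the same direction as $C$ (this is where $\dg>0$ enters). Hence $\mathrm{lk}(K,K')=\dg-\dg=0$, and together with $\det D\hF=\dg\eps^2>0$ your classification argument concludes. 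The same count gives framing $\dg-1$ for the untwisted $F_j$, which explains why the correction factor is exactly $e^{-2\pi i(\dg-1)x}$. That insight is what your route buys, at the cost of invoking uniqueness of tubular neighbourhoods and the classification of framings, whereas the paper's isotopy is fully explicit.
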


    \begin{proof}
        Note first that any isotopy between monotone circle maps $f$ extends for a sufficiently small $\eps>0$ (by plugging this isotopy directly into the definition~\eqref{eq:hF-def}) to an isotopy between the corresponding maps~$\hF$. Thus we can, without loss of generality, replace the map~$f$ by the map $x\mapsto \dg x$ in the same isotopy class.

       Now, to be precise, let us fix an embedding of $M$ in $\Sph$. Namely, let the 3-sphere be defined as $\Sph=\{(\cznew,\cwnew)\in \C^2\mid |\cznew|^2+|\cwnew|^2= 1\}$. Consider the action of~$\C^*$ on~$\C^2$ by $(1,\dg)$-weighted scaling:
        \begin{equation}\label{eq:g-rho}
            g_{\rho}(\cznew,\cwnew)=(\rho \cznew, \rho^\dg \cwnew), \quad \rho\in \C^*.
        \end{equation}
        Then, for the action of the subgroup of positive real numbers $\R_+^*\subset \C^*$, the orbit of any point of $\C^2\setminus \{(0,0)\}$ intersects the sphere in a unique point, thus providing a projection  $\proj:\C^2\setminus \{(0,0)\}\to\Sph$. Now, we embed $M\subset \Sc\times \C$ to $\Sph$ by a composition of a natural ``vertical'' embedding
        \[
        \iota:\Sc\times \C \to \C^2\setminus \{(0,0)\}, \quad (x,\cwnew)\mapsto (e^{2\pi i x},\cwnew)
        \]
        and of the projection $\proj$: the restriction $\proj|_{\iota(\Sc\times \C)}:\Sc\times \C \to \Sph\setminus \{\cwnew=0\}$ is a diffeomorphism.

        Next, consider linear rescalings
        \[
        L_a:(x,\cwnew)\mapsto (x,a\cwnew), \quad a\in \C^*,
        \]
        acting on $\Sc\times \C$. These rescalings are isotopic to the identity. In terms of these rescalings, the map~\eqref{eq:hF-def} with $f(x)=\dg x$, considered as a map from $\Sc\times \C$ to itself (that is diffeomorphism onto its image when restricted to~$M$) can be seen as a composition $L_{\frac{1}{2}}\circ \hF'\circ L_{2\eps}$, where
        \begin{equation}\label{eq:F-hat-p}
            \hF'(x,\cwnew)= (\dg x, e^{-2\pi i (\dg-1)x} \cwnew + e^{2\pi i x}) = \left(\dg x, e^{2\pi i x} \cdot (1+  e^{-2\pi i \dg x} \cwnew ) \right);
        \end{equation}
        note that the map~\eqref{eq:F-hat-p} restricts, for a sufficiently small $\eps>0$, to a diffeomorphism from \(M_{2\eps}:=L_{2\eps}(M)=\{|\cwnew| \leq 2\eps\}\) onto its image.

        As both linear rescalings $L_{2\eps},L_{\frac{1}{2}}$ are isotopic to the identity in $\Sc\times \C$, it thus suffices to consider the map~$\hF'$, given by~\eqref{eq:F-hat-p},
        in an (arbitrarily small) $2\eps$-neighbourhood $M_{2\eps}=\{(x,\cwnew)\mid |\cwnew|\le 2\eps\} \subset \Sc\times \C$ of the central circle $\{\cwnew=0\}$.  Namely, it suffices to show that the maps $\proj\circ \iota|_{M_{2\eps}}$ and $\proj \circ \iota \circ \hF'|_{M_{2\eps}}$ are isotopic in the class of maps from $M_{2\eps}$ to $\Sph$ that are diffeomorphisms onto their images. Note that the composition $\iota \circ \hF'$ can be also written as $\Psi_0'\circ \iota$, where
        \[
            \Psi_0'(\cznew,\cwnew)= (\cznew^{\dg}, \cznew \cdot (1+\cznew^{-\dg}\cwnew)),
        \]
        thus we are interested in an isotopy between the maps
        \[
        \proj|_{\iota(M_{2\eps})} \quad \text{and}  \quad  \proj\circ \Psi_0'|_{\iota(M_{2\eps})}.
        \]

        Let us interchange the coordinates of the map $\Psi_0'$. Namely, consider
        the map $\Psi_0: \C^2\to\C^2,$
        \begin{equation}\label{eq:F-hat-pp}
            \Psi_0:(\cznew,\cwnew)\mapsto
            (\cznew\cdot (1+\cwnew\cznew^{-\dg}),\cznew^{\dg}),
        \end{equation}
        as well as the interchanging of coordinates map $R:(\cznew,\cwnew)\mapsto (\cwnew,\cznew)$. Then, $\Psi_0 = R\circ\Psi_0'$, and the map $R$ that is a rotation by~$\pi$ around the $(1,1)$-direction in $\C^2$ (and thus~$R$ is isotopic to the identity in the group of isometries of $\C^2$). We claim that this implies an existence of an isotopy between
        \begin{equation}\label{eq:Psi-0-prime}
            \proj\circ \Psi_0'|_{\iota(M_{2\eps})}\quad \text{and} \quad \proj\circ \Psi_0|_{\iota(M_{2\eps})} = \proj\circ R\circ \Psi_0'|_{\iota(M_{2\eps})}
        \end{equation}
        Indeed, it would be tempting to insert the isotopy $R_t$ between $R$ and $\id$ directly in~\eqref{eq:Psi-0-prime}, but it could be that for an intermediate rotation post-composition with the projection will lead to a degeneracy or non-injectivity and therefore failing to be an isotopy. Thus, instead we first rescale $\Psi_0'$ vertically,
        passing from it to its composition with $L'_{\delta}:(\cznew,\cwnew)\mapsto (\cznew,\delta\cwnew)$
        for a sufficiently small $\delta>0$. Then, the resulting image is sufficiently close to the central circle $\{\cwnew=0\}$ on the cylinder $\{|\cznew|=1\}$, and the transversality of the projection trajectories to the sphere $\Sph$ implies that the composition with the projection stays a diffeomorphism on the image along all the isotopy $\proj \circ R_t \circ L'_\delta \circ\Psi_0'$. Finally, we do a linear rescaling for the $\cznew$ coordinate, dividing it by $\delta$ and thus arriving to the desired map $\proj \circ \Psi_0:\iota(M_{2\eps})\to \Sph$.

        Next, it is a straightforward check that the map~$\Psi_0$ commutes with the action~\eqref{eq:g-rho}. We will consider the map $\Psi_0$ as a map from the \emph{complex} neighbourhood $U_{2 \eps}(\Theta)$ of the central circle  $\Theta:=\{(\cznew,0):  |\cznew|=1\}$ in~$\C^2$. We will find an isotopy $\Psi_t$ between $\Psi_0$ and the identity in the space of (holomorphic) diffeomorphisms of $U_{2 \eps}(\Theta)$ onto their images, that commute with the action~\eqref{eq:g-rho}. Once such isotopy $\Psi_t$ is found, we will show that the compositions $\proj\circ \Psi_t$ provide an isotopy from $(\proj\circ \Psi_t)|_{M_\eps}$ to identity in the space of real $3$-dimensional diffeomorphisms onto their images.

        To find the isotopy, let us first post-compose $\Psi_0$ with the family of ``sliding'' maps
        \[
            S_t:(a,b)\mapsto (a,b-t a^\dg),\quad t\in [0,1],
        \]
        that are (global) diffeomorphisms of~$\C^2$, commuting with the action~\eqref{eq:g-rho}. This provides an isotopy $\Psi_t=S_t\circ \Psi_0$ between $\Psi_0$ and
        \[
            \Psi_1:(\cznew,\cwnew) \mapsto (\cznew(1+\cwnew \cznew^{-\dg}), \cznew^{\dg} \,(1-(1+\cwnew\cznew^{-\dg})^{\dg})),
        \]
        and for the second coordinate in the neighbourhood of $\Theta$ one has
        \[
            \cznew^{\dg} \, (1-(1+\cwnew\cznew^{-\dg})^{\dg}) =  -\dg \cwnew + O(z^2).
        \]
        Now, making another isotopy to rescale the second coordinate by a factor~$-\frac{1}{\dg}$,
        \begin{equation}\label{eq:psi2}
            \Psi_t= \left(\begin{matrix}
            1 & 0 \\
            0 & \frac{1}{\dg^{t-1}} \, e^{i\pi (t-1)}
            \end{matrix} \right)
             \circ \Psi_1, \quad t\in [1,2]
        \end{equation}
        we arrive at a map $\Psi_2$ that is close to the identity in a neighbourhood of~$\Theta$.

        Hence, the family
        \begin{equation}\label{eq:t-family}
            \Psi_t=(3-t)\Psi_2 + (t-2) \cdot \id, \quad t\in [2,3]
        \end{equation}
        is an isotopy between $\Psi_2$ and $\id$, consisting of diffeomorphisms on the image of $U_{\eps}(\Theta)$ when~$\eps$ is sufficiently small. As the action~\eqref{eq:g-rho} is linear,
        the map $\Psi_2$ and hence all the family~\eqref{eq:t-family} also commute with this action.

        Now, let $\Psi_t$, $t\in [0,3]$, be the isotopy of the holomorphic diffeomorphisms between $\Psi_0$ and $\Psi_3=\id$ that we have constructed. Consider the family $\proj \circ \Psi_t$ as maps on a sufficiently small neighbourhood~$M_{\eps}$. It suffices to check that this family is injective on the circle~$\Theta$ and that its 3-dimensional real differential $D (\proj \circ \Psi_t)$  stays non-degenerate on~$\Theta$ for all~$t$.

        We already know that the 4-dimensional real differential of $\Psi_t$ stays non-degenerate, and that $ \ker D(\proj)$ is one-dimensional (it is given by the tangent direction to the orbits of the action~\eqref{eq:g-rho} for real values of~$\rho$):
        \[
            \ker D(\proj)|_{\Psi_t(\cznew,0)} = T_{\Psi_t(\cznew,0)} \{g_\rho \circ \Psi_t (\cznew,0) \mid \rho \in \R_+\}.
        \]
        As $\Psi_t$ commutes with the action~\eqref{eq:g-rho}, this orbit is the $\Psi_t$-image of the orbit passing through the initial point~$(\cznew,0)$:
        \[
            \{g_\rho \circ \Psi_t (\cznew,0) \mid \rho \in \R_+\} =
                \Psi_t  \{ g_\rho(\cznew,0) \mid \rho \in \R_+\},
        \]
        and the latter is transverse to the $3$-sphere~$\Sph$, thus implying the non-degeneracy of the differential $D (\proj \circ \Psi_t)$ in restriction to $T_{(\cznew,0)} \Sph$. Finally, the injectivity check of $\Psi_t$ on~$\Theta$ is straightforward.

        \end{proof}

\begin{remark}
    Lemma~\ref{l:untwist} also follows from~\cite{MR2871329}, where tubular neighbourhoods of tori $T^p$ of higher dimension were studied; the map~\eqref{eq:hF-def} is the degree-$\dg$ ``favourite lifting'' of \cite[Example~4.2]{MR2871329}, and the existence of an isotopy to the identity follows from \cite[Proposition~4.4]{MR2871329}. Also, such maps were studied in~\cite{MR726853}. See also \cite[Chapter~4]{MR448362} for the general isotopy theory of tubular neighbourhoods. However, as we needed the description only in this particular case, we decided, for the reader's convenience, to give a straightforward argument.
\end{remark}

    As it was already mentioned, Lemma~\ref{l:untwist} together with the isotopy extension theorem imply that  each $\hF_j$ can be extended to a diffeomorphism $\tF_j:\Sph\to\Sph$. To conclude the construction of the example, we note that this choice of the extensions can be done in a particular way that ensures ``sufficient repulsion'' from~$M'$ to~$M$.
    \begin{lemma}\label{l:extension}
        There exists a choice of extensions $\tF_j$ of the maps $\hF_j:M\to M$ such that
        \begin{equation}\label{eq:covering}
            \Sph = \bigcup_{j=1}^s \tF_j^{-1}(\intrr M).
        \end{equation}
    \end{lemma}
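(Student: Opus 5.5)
We want extensions $\tF_j$ of $\hF_j$ such that every point of $\Sph$ is sent into $\intrr M$ by at least one of the $\tF_j$. The plan is to first build one diffeomorphism $\Phi$ of $\Sph$ with a strong North--South behaviour towards $M$. Then we compose an arbitrary extension with it, and finally check the covering condition~\eqref{eq:covering}, possibly after a further shrinking of the fibre parameter $\eps$.

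\emph{Step 1: an arbitrary extension.} By Lemma~\ref{l:untwist} and the isotopy extension theorem, each $\hF_j$ has some extension $\tF_j^{(0)}:\Sph\to\Sph$. We choose $\eps$ small enough that $\hF_j(M)\subset M_{1/2}:=\{(x,\cwnew)\in M\mid |\cwnew|\le 3/4\}$, a smaller solid torus contained in $\intrr M$. Since $\tF_j^{(0)}$ is a diffeomorphism, $\tF_j^{(0)}(M')=\Sph\setminus \tF_j^{(0)}(\intrr M)$. This set is a closed region containing $\Sph\setminus \intrr M_{1/2}$, and its interior contains $\partial M$ and a collar of it inside $M$.

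\emph{Step 2: a gradient-like diffeomorphism.} Let $\Phi$ be the time-$T$ map of the flow of a smooth vector field on $\Sph$ with the following properties. It vanishes on $M_{1/2}$. It points inward along $\partial M$ and along the nested tori $\{|\cwnew|=\rho\}$, $\rho\in(3/4,1]$. In $M'=\D\times\Sc$ it flows from the core circle $\{0\}\times\Sc$ towards $\partial M'=\partial M$. This flow has the core circle of $M'$ as a repeller, and every other point of $M'\cup(M\setminus M_{1/2})$ flows into $M_{1/2}$ or accumulates on $\partial M_{1/2}$.

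We set $\tF_j:=\Phi\circ\tF_j^{(0)}$. Since $\Phi$ is the identity on $M_{1/2}\supset\hF_j(M)$, this $\tF_j$ still extends $\hF_j$. Taking $T$ large, $\Phi$ maps the complement of a small tubular neighbourhood $W$ of the core circle $\Theta'$ of $M'$ into $\intrr M$. Hence $\tF_j^{-1}(\intrr M)\supset \Sph\setminus (\tF_j^{(0)})^{-1}(W)$.

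\emph{Step 3: covering.} It remains to ensure that the sets $(\tF_j^{(0)})^{-1}(W)$, $j=1,\dots,s$, have empty common intersection. The main obstacle is that the extensions $\tF_j^{(0)}$ are uncontrolled on $M'$, so these preimages could coincide. To handle this we precompose with a different isotopically trivial diffeomorphism for each $j$: each $\tF_j^{(0)}$ can be modified by a diffeomorphism supported in $\intrr M'$, which keeps it an extension of $\hF_j$. Since $s\ge2$, we arrange that $(\tF_1^{(0)})^{-1}(\Theta')$ and $(\tF_2^{(0)})^{-1}(\Theta')$ are disjoint circles in $\intrr M'$, for instance by moving one of them by a small isotopy. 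Then for $W$ small enough their neighbourhood preimages are disjoint, and~\eqref{eq:covering} follows.

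Finally, we note the consequence used afterwards: the probability that a random trajectory stays outside $\intrr M$ for $n$ steps is at most $(1-\min_j p_j)^n$. Hence a.s.\ the trajectory enters $M$, and it stays there forever because $\hF_j(M)\subset M$.
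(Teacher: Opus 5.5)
Your proof is correct, but it takes a different route from the paper's.

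\textbf{The paper's route.} It takes arbitrary extensions and notes that the bad sets $K_j=\tF_j^{-1}(M')$ are compact subsets of $\intrr M'$, since $\tF_j(M)\subset\intrr M$. It then makes a single modification: it replaces $\tF_1$ by $\tF_1\circ\Phi^{-1}$. Here $\Phi=\Phi_v^t\times\id_{\Sc}$ on $M'=\D\times\Sc$, extended by the identity on $M$, and $\Phi_v^t$ is the time-$t$ map of a vector field on $\D$ that is flat at $\partial\D$ and pushes every compact subset of $\intrr\D$ towards a boundary point $\cwnew_0$. For large $t$ the $\D$-projection of $\Phi(K_1)$ misses that of $K_2$, so $\Phi(K_1)\cap K_2=\emptyset$.

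\textbf{Your route.} You post-compose every extension with one global North--South diffeomorphism, equal to the identity on a solid torus containing all $\hF_j(M)$. This shrinks each bad set to the preimage of a thin tube $W$ around the core circle $\Theta'$ of $M'$. You then make the two preimage circles disjoint by general position ($1+1<3$), precomposing with a diffeomorphism supported in $\intrr M'$. This costs two modifications, a transversality argument and a compactness argument. The choices must be made in the order: perturbation of $\tF_1^{(0)}$, then $W$, then $T$. Your text states them in a different order, but nothing circular occurs.

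\textbf{Comparison.} The paper's argument is more elementary: it separates arbitrary compact subsets of $\intrr M'$ using only the product structure of $M'$. Yours gives a more geometric picture, in which each map fails to send a point into $\intrr M$ only on a thin tube around a circle. The price is that it relies on the codimension count.

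\textbf{A minor point.} No further shrinking of $\eps$ is needed. The compact set $\bigcup_j\hF_j(M)\subset\intrr M$ already lies in some $\{|\cwnew|\le\rho\}$ with $\rho<1$, and you can let your North--South diffeomorphism be the identity there.
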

    \begin{proof}
    Indeed,~\eqref{eq:covering} is equivalent to the statement that the intersection of the complements of the preimages is empty:
    \begin{equation}\label{eq:goal-nointersect}
        \bigcap_{j=1}^s \tF_j^{-1} (\Sph \setminus \intrr M)= \bigcap_{j=1}^s \tF_j^{-1} (M') = \emptyset.
    \end{equation}
    Consider an arbitrary choice of extensions $\tF_j$;
    as $\tF_j(M)\subset M$, we have
    \[
        K_j:=\tF_j^{-1}(M')\subset \intrr M'.
    \]
    If the sets $K_j$ do not intersect, there is nothing to do. Otherwise, the sets $K_1,K_2$ are compact subsets of the interior of the solid torus~$M'$. We claim that there exists a diffeomorphism~$\Phi$ satisfying
    \begin{equation}\label{eq:Phi-conditions}
        \Phi:M'\to M', \quad \Phi|_{\partial M'}= \id, \quad \Phi(K_1)\cap K_2 = \emptyset.
    \end{equation}
    It would then suffice to extend $\Phi$ to the whole sphere~$\Sph$ by the identity on $M$, and to replace~$\tF_1$ by $\tF_1\circ \Phi^{-1}$ (thus replacing $K_1$ by $\Phi(K_1)$ and hence ensuring the empty intersection~\eqref{eq:goal-nointersect}).

    To ensure that the diffeomorphism~$\Phi$ exists, take a vector field~$v$ on the disc~$\D$ that vanishes on the boundary and points towards the point $\cwnew_0=1\in \partial \D$:
    \[
        v(\cwnew) = \rho(\cwnew) \cdot (\cwnew_0-\cwnew),
    \]
    {where $\rho(\cwnew)=\exp(-\frac{1}{1-|\cwnew|^2})$.}

    \begin{figure}
        \centering
        \includegraphics[width=0.3\linewidth]{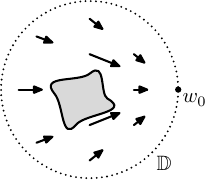}
        \caption{Vector field $v$ on the disc~$\D$: under its flow $\Phi_v^t$, the images of any compact set of $\intrr\D$ converges to the point $\cwnew_0\in \partial \D$.
        }
        \label{fig:v-on-D}
    \end{figure}
    Then, as $t\to \infty$, the images of any point $z\in \intrr \D$ under the time-$t$ map $\Phi_v^t$ converge to~$\cwnew_0$, and this convergence is uniform on compact sets (see Fig.~\ref{fig:v-on-D}). Now, let $K_1^{\D},K_2^{\D}\subset \D$ be the projections of $K_1$, $K_2$ on the first coordinate in the decomposition $M'=\D\times \Sc$. Then $K_1^{\D},K_2^{\D}\subset \intrr \D$, and hence for some sufficiently large $t$ one has $\Phi_v^t(K_1^{\D})\cap K_2^{\D}=\emptyset$: the image lies in an arbitrarily small neighbourhood of~$z_0$, and there is such a neighbourhood disjoint from~$K_2^{\D}$. We can then fix such a $t$ and take $\Phi=\Phi_v^t \times \id_{\Sc}$.
    \end{proof}

    Now, the conclusion~\eqref{eq:covering} of Lemma~\ref{l:extension} implies that for every point of $\Sph$ at least one of the maps sends this point inside~$M$ in a single step.
    Since $p_j>0$, with probability at least $\min_j p_j$ a random trajectory
    enters~$M$ at each step from any point of $\Sph\setminus M$; hence
    for every initial point $b_0\in\Sph$ the random trajectory
    $b_{n+1}=\tF_{\omega_n}(b_n)$ almost surely enters~$M$ after finitely
    many steps, and stays there since $\tF_j(M)\subset\mathrm{int}\,M$.

    Since all points almost surely enter $M$ in finitely many steps, all stationary measures are supported inside~$M$. The random dynamics, restricted to~$M$, is exactly that of Example~\ref{ex:torus}, thus implying all the claimed properties.
\end{proof}

\begin{proof}[Construction of Example~\ref{ex:rectangle}]
    Note that each $F_{i,j}$ is a smooth map with
    $F_{i,j}(\Pi)\subset\Pi$ and Lipschitz constant strictly less than~$1$, therefore the system is a uniformly contracting iterated function system. In particular, $\hmsp$ is the unique stationary measure of $(F_{i,j};p_{i,j})$.

    Now, if there were a proper closed invariant submanifold, it would contain the Cantor set $C$, the support of the unique stationary measure. Hence, it would be one-dimensional with a horizontal tangent line at the point $(0,0)\in C$. Thus, in a neighbourhood of $(0,0)$ it would be the graph of some function $x\mapsto r(x)$. However, no such graph can be invariant under both maps $F_{1,1}$ and $F_{1,2}$. Indeed, take any point $x_0\notin C$ in this neighbourhood; then, one has
    \[
        F_{1,1}(x_0,r(x_0)) = \left(\frac{x_0}{3}, \frac{r(x_0)}{2}-\theta(x_0)\right), \quad  F_{1,2}(x_0,r(x_0)) = \left(\frac{x_0}{3}, \frac{r(x_0)}{2}+\theta(x_0)\right).
    \]
    The graph would thus contain two points with the same $x$-coordinate $\frac{x_0}{3}$, whose $y$-coordinates would differ by $2\theta(x_0)\neq 0$. This provides the desired contradiction, showing that no proper closed invariant submanifolds exist.

\end{proof}

\begin{proof}[Construction of Example~\ref{ex:sphere2}]
    Embed $\Pi\subset\mathbb{S}^2$ and extend each $F_{i,j}$ to a
    diffeomorphism $\tF_{i,j}:\mathbb{S}^2\to\mathbb{S}^2$ as in the
    construction of Example~\ref{ex:sphere}, chosen so that
    $\mathbb{S}^2 = \bigcup_{i,j}\tF_{i,j}^{-1}(\mathrm{int}\,\Pi)$.
    Then every point almost surely enters~$\Pi$ in finitely many steps and stays there, reducing the claims to Example~\ref{ex:rectangle}.
\end{proof}

\section*{Acknowledgments}

The authors are grateful to Serge Cantat, \'Etienne Ghys, Anton Gorodetski, S\'ebastien Gou\"ezel, Ale Jan Homburg, Jeroen 
Lamb and Isabelle Liousse for the helpful discussions.

\bibliographystyle{plain}
\bibliography{references}
\end{document}